\theoremstyle{definition}
\newtheorem{theorem}{Theorem}[section]
\newtheorem{prop}[theorem]{Proposition}
\newtheorem{lemma}[theorem]{Lemma}
\newtheorem{cor}[theorem]{Corollary}
\newtheorem{conj}[theorem]{Conjecture}
\newtheorem{ex}[theorem]{Example}
\newtheorem{dfn}[theorem]{Definition}
\newtheorem{remark}[theorem]{Remark}
\def\ep{\epsilon}
\def\R{\mathbb{R}}
\def\Z{\mathbb{Z}}
\def\Aut{{\rm Aut}}
\def\Diff{{\rm Diff}}
\def\Sympeo{{\rm Sympeo}}
\def\Symp{{\rm Symp}}
\def\Aut{{\rm Aut}}
\def\Cont{{\rm Cont}}
\newcommand{\cL}{\mathcal{L}}
\begin{document}

\title[Chekanov's dichotomy in contact topology]{Chekanov's dichotomy in contact topology}

\noindent
\author{Daniel Rosen}
\email{daniel.rosen@rub.de}
\address{Fakult\"{a}t f\"{u}r Mathematik, Ruhr-Universit\"{a}t Bochum\\ Universit\"{a}tstr. 150, 44780 Bochum, Germany}

\author{Jun Zhang}
\email{jun.zhang.3@umontreal.ca}
\address{Centre de recherches mathématiques, University of Montreal, \\ C.P. 6128 Succ. Centre-Ville, Montreal, QC, H3C 3J7, Canada}

\begin{abstract}
In this paper we study submanifolds of contact manifolds. The main submanifolds we are interested in are contact coisotropic submanifolds. They can be viewed as analogues to symplectic contact coisotropic submanifolds, and can be defined by the symplectic complement with respect to the symplectic structure $d\alpha|_{\xi}$, the restriction of $d\alpha$ on the contact hyperplane field $\xi$. Based on a correspondence between symplectic and contact coisotropic submanifolds, we can show contact coisotropic submanifolds admit a $C^0$-rigidity, similar to Humili\`ere-Leclercq-Seyfaddini's coisotropic rigidity on symplectic manifolds in \cite{HLS15}. Moreover, based on Shelukhin's norm in \cite{She17} defined on the contactomorphism group, we define a Chekanov type pseudo-metric on the orbit space of a fixed submanifold of a contact manifold. Moreover, we can show a dichotomy of (non-) degeneracy of this pseudo-metric when the dimension of this fixed submanifold is equal to the one for a Legendrian submanifold. This can be viewed as a contact topology analogue to Chekanov's dichotomy in \cite{Che00} of (non-)degeneracy of Chekanov-Hofer's metric on the orbit space of a Lagrangian submanifold. The proof of our result follows several arguments from \cite{Ush14} and \cite{Ush15}. \end{abstract}

\maketitle 
\tableofcontents

\section{Introduction and statement of results}
Using Hofer's metric to study submanifolds of a symplectic manifold has been carried out in \cite{Ush14}, extending Chekanov's work in \cite{Che00}. Chekanov's original work focused on Lagrangian submanifolds in symplectic topology, whereas Usher considered, more generally, coisotropic submanifolds. It is natural to extend this story to contact topology. By analogy, the main submanifolds of a contact manifold that we are interested in are {\it contact} coisotropic submanifolds. They are defined as follows.

\begin{dfn} (Definition 2.1 in \cite{Hua15}) \label{dfn-Hua15} Let $(M^{2n+1}, \xi = \ker \alpha)$ be a co-oriented contact manifold. A submanifold $Y \subset M$ is called {\it contact} coisotropic if for any point $p \in Y$, $(T_p Y \cap \xi_p)^{\perp_{d\alpha}} \subset T_p Y \cap \xi_p$ where $\perp_{d\alpha}$ denotes the symplectic orthogonal complement with respect to the non-degenerate $2$-form $d \alpha|_{\xi}$ on $\xi$. \end{dfn}

\begin{remark}
Let up points out that this definition is independent of the choice of the contact form $\alpha$, since the conformal class of the symplectic form $d\alpha$ on the contact distribution $\xi$ is well-defined (i.e., independent of $\alpha$).
\end{remark}

A crucial criterion to check whether a submanifold is contact coisotropic or not is the following result that is analogous to the symplectic case. Recall that a submanifold $N$ of a symplectic manifold $(W,\omega)$ is symplectic coisotropic if and only if the ideal $I_N = \{F \in C_c^{\infty}(W) \,| \, F|_N = 0\}$ is a Lie subalgebra of $C^{\infty}_c(M)$ under the symplectic Poisson bracket (see Chapter I Section 2, Lemma 2.1 in \cite{KM93}).  In terms of the {\it contact Poisson bracket} $\{-, -\}_{\alpha}$, whose definition is recalled in Section \ref{sec-prelim} below, we have the following similar result. 

\begin{prop} \label{C-P} Let $(M, \xi = \ker \alpha)$ be a co-oriented contact manifold. A submanifold $Y \subset M$ is contact coisotropic if and only if the ideal $I_Y = \{F \in C_c^{\infty}(M) \,| \, F|_Y = 0\}$ is a Lie subalgebra with respect to the {contact} Poisson bracket $\{-, - \}_{\alpha}$. \end{prop}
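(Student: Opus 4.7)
My plan is to mirror the classical proof of the symplectic analogue, adapting it to the contact setting by tracking the Reeb direction. The contact Hamiltonian vector field $X_F$ of $F\in C^\infty(M)$ with respect to $\alpha$ is uniquely determined by $\alpha(X_F)=F$ and $\iota_{X_F}d\alpha = R_\alpha(F)\,\alpha - dF$, where $R_\alpha$ is the Reeb vector field of $\alpha$. A short calculation rewrites the contact Poisson bracket (up to sign convention) as $\{F,G\}_c = X_F(G) - G\cdot R_\alpha(F)$, so that for $F,G\in I_Y$ and $p\in Y$ (where $G(p)=0$) one has
\[
\{F,G\}_c(p) \;=\; dG(X_F|_p).
\]
Since the covectors $\{dG|_p : G\in I_Y\}$ fill out exactly the annihilator of $T_pY$ in $T_p^*M$, closure of $I_Y$ under $\{-,-\}_c$ reduces to the pointwise statement that $X_F|_p\in T_pY$ for every $F\in I_Y$ and every $p\in Y$.

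The first step I would carry out is the inclusion, valid for every $F\in I_Y$ and $p\in Y$,
\[
X_F|_p \;\in\; (T_pY\cap\xi_p)^{\perp_{d\alpha}}.
\]
Since $F(p)=0$, the equation $\alpha(X_F)=F$ forces $X_F|_p\in\xi_p$; and for any $w\in T_pY\cap\xi_p$, applying $\iota_{X_F}d\alpha = R_\alpha(F)\alpha-dF$ to $w$ and using $\alpha(w)=0$ together with $dF|_{T_pY}=0$ gives $d\alpha(X_F|_p,w)=0$.

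The second step, which I expect to be the technical crux, is the reverse inclusion: the map $F\mapsto X_F|_p$ carries $I_Y$ \emph{onto} $(T_pY\cap\xi_p)^{\perp_{d\alpha}}$. Given $v\in(T_pY\cap\xi_p)^{\perp_{d\alpha}}\subset\xi_p$, I would construct $F\in I_Y$ with $X_F|_p=v$ by first prescribing the $1$-jet of $F$ at $p$: the non-degeneracy of $d\alpha|_{\xi_p}$ uniquely determines $dF|_{\xi_p}$ from $v$ via $dF|_{\xi_p}(w) = -d\alpha(v,w)$, while the value $R_\alpha(F)(p)$ is used as a free parameter to ensure $dF$ vanishes on the (at most one-dimensional) part of $T_pY$ transverse to $\xi_p$. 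The hypothesis $v\in(T_pY\cap\xi_p)^{\perp_{d\alpha}}$ is precisely what makes this linear system consistent, and I would split according to whether $R_\alpha|_p\in T_pY$ or not. A standard cutoff in Darboux coordinates for $\alpha$ adapted to $Y$ near $p$ then upgrades this jet to a compactly supported $F$ with $F|_Y=0$.

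Combining the two steps yields the proposition. If $Y$ is contact coisotropic, Step 1 together with the coisotropic condition gives $X_F|_p\in(T_pY\cap\xi_p)^{\perp_{d\alpha}}\subset T_pY\cap\xi_p\subset T_pY$ for every $F\in I_Y$, so by the opening reduction $I_Y$ is a Lie subalgebra. Conversely, if $I_Y$ is a Lie subalgebra, the opening reduction gives $X_F|_p\in T_pY$ for every $F\in I_Y$; Step 2 then forces $(T_pY\cap\xi_p)^{\perp_{d\alpha}}\subset T_pY$, and since the left side already lies in $\xi_p$, we conclude $(T_pY\cap\xi_p)^{\perp_{d\alpha}}\subset T_pY\cap\xi_p$. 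The main obstacle is thus the surjectivity in Step 2, where one must simultaneously prescribe $X_F|_p=v$ and $F|_Y=0$; the delicate point is the interaction with the Reeb direction, which has no counterpart in the symplectic argument.
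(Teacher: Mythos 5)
Your proof is correct, and the forward direction coincides with the paper's (both show $X_F|_p\in\xi_p$ and $X_F|_p\perp_{d\alpha}(T_pY\cap\xi_p)$, then invoke coisotropy). For the converse, however, you take a genuinely different route. You prove that $F\mapsto X_F|_p$ maps $I_Y$ \emph{onto} $(T_pY\cap\xi_p)^{\perp_{d\alpha}}$ by prescribing the full $1$-jet of $F$ at $p$, using $dF_p(R_\alpha)$ as the free parameter that absorbs the (at most one-dimensional) part of $T_pY$ transverse to $\xi_p$; the hypothesis $v\in(T_pY\cap\xi_p)^{\perp_{d\alpha}}$ is exactly the consistency condition for this linear system, so the construction works (one small correction: the relevant case split is whether $T_pY\subset\xi_p$, not whether $R_\alpha|_p\in T_pY$ -- though either way the system is solvable). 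The paper instead argues by contradiction: given $v_*\in(T_{p_*}Y\cap\xi_{p_*})^{\perp_{d\alpha}}\setminus T_{p_*}Y$, it picks $G_*\in I_Y$ with $dG_*(v_*)\neq 0$, deduces from $d\alpha(X_{G_*},v_*)=-dG_*(v_*)\neq 0$ and the identity $W^{\perp_{d\alpha}\perp_{d\alpha}}=W$ that $X_{G_*}(p_*)\notin T_{p_*}Y$, and then picks a second function $H_*\in I_Y$ detecting this to produce a nonvanishing bracket. The paper's argument is lighter: it only needs the elementary fact that a vector not tangent to $Y$ is detected by some element of $I_Y$, applied twice, and entirely sidesteps the Reeb-direction bookkeeping you correctly identify as the technical crux of your approach. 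What your approach buys in exchange is a sharper structural statement -- an exact identification of $\{X_F|_p : F\in I_Y\}$ with $(T_pY\cap\xi_p)^{\perp_{d\alpha}}$ -- which is of independent interest and closer in spirit to the paper's later local-model computations.
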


Proposition \ref{C-P} has useful corollaries. On the one hand, it establishes various correspondences between symplectic coisotropic submanifolds and contact coisotropic submanifolds, see Section \ref{sec-cor}. On the other hand, based on the main result from \cite{HLS15}, we obtain a $C^0$-rigidity property of contact coisotropic submanifolds as follows.

\begin{theorem} \label{rigidity} Suppose $Y$ is a coisotropic submanifold of a contact manifold $(M, \xi = \ker \alpha)$ and $\phi \in \Aut(M, \xi)$. If $\phi(Y)$ is smooth, then $\phi(Y)$ is also coisotropic. \end{theorem}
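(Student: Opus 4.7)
The plan is to reduce the statement to the $C^0$-rigidity theorem for coisotropic submanifolds in symplectic manifolds (\cite{HLS15}) by passing to the symplectization of $(M,\xi)$.

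First, I would set up the symplectization $(SM,\omega) := (M\times\mathbb{R},\, d(e^t\alpha))$ and use it to transport the problem to a symplectic one. The relevant corollary of Proposition \ref{C-P} from Section \ref{sec-cor} gives the equivalence
\[
Y \subset M \text{ is contact coisotropic} \iff \tilde Y := Y \times \mathbb{R} \subset SM \text{ is symplectic coisotropic.}
\]
This can be verified directly at the level of tangent spaces by computing the $\omega$-orthogonal of $T_pY \oplus \mathbb{R}\partial_t$ using $\omega_{(p,t)} = e^t(dt\wedge\alpha + d\alpha)$, which reduces exactly to the condition $(T_pY\cap\xi_p)^{\perp_{d\alpha}} \subset T_pY\cap\xi_p$.

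Next, I would lift $\phi$ to the symplectization. For a smooth contactomorphism with conformal factor $g > 0$ (so that $\phi^*\alpha = g\cdot\alpha$), the map
\[
\tilde\phi(p,t) := \bigl(\phi(p),\, t - \log g(p)\bigr)
\]
is a symplectomorphism of $(SM,\omega)$ and satisfies $\tilde\phi(\tilde Y) = \phi(Y)\times\mathbb{R}$. Since the theorem has content only when $\phi$ is not itself smooth (otherwise $\phi(Y)$ being contact coisotropic is immediate from Proposition \ref{C-P}), $\Aut(M,\xi)$ should be read as the group of contact homeomorphisms. Accordingly, for a $C^0$-approximating sequence of smooth contactomorphisms $\phi_n \to \phi$, I would lift each $\phi_n$ by the formula above and verify that the lifts $\tilde\phi_n$ converge in $C^0$ on compact sets to a symplectic homeomorphism $\tilde\phi$ in the sense of \cite{HLS15}. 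With this in hand, the endgame is short: $\phi(Y)$ smooth $\Rightarrow$ $\tilde\phi(\tilde Y) = \phi(Y)\times\mathbb{R}$ smooth, so by \cite{HLS15} applied to $\tilde\phi$ and $\tilde Y$, $\phi(Y)\times\mathbb{R}$ is symplectic coisotropic, and by the correspondence above, $\phi(Y)$ is contact coisotropic.

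The principal technical obstacle is the lifting step: showing that $\phi \mapsto \tilde\phi$ indeed sends contact homeomorphisms to symplectic homeomorphisms. Concretely, one needs the conformal factors $g_n$ of a $C^0$-approximating sequence $\phi_n$ to converge $C^0$ on compact sets so that the lifts converge uniformly, and one must reconcile the compact-support conventions implicit in \cite{HLS15} with the non-compact $\mathbb{R}$-direction of $SM$ (for example, by localizing $Y\times\mathbb{R}$ to $Y\times K$ for large compact $K\subset\mathbb{R}$ and using locality of the coisotropic property).
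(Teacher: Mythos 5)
Your proposal follows essentially the same route as the paper: pass to the symplectization, use the coisotropic correspondence of Proposition \ref{corres}, lift the approximating contactomorphisms via $(p,t)\mapsto(\phi_n(p),\,t-g_n(p))$, and apply the main theorem of \cite{HLS15}. The ``principal technical obstacle'' you flag --- uniform convergence of the conformal factors $g_n$ --- is not an obstacle here, since it is built into the definition of $\Aut(M,\xi)$ (Definition \ref{aut-group}), which requires exactly that the $g_n$ converge uniformly to a continuous limit; this is what makes the lifts $C^0$-converge to a symplectic homeomorphism.
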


Here $\Aut(M, \xi)$ is defined in Definition \ref{aut-group} where roughly speaking each element is a homeomorphism that is a $C^0$-limit of contactomorphisms (in a strong sense). This can be regarded as a contact topology analogue to the group ${\rm Hameo}(M, \omega)$ that is often used in $C^0$-symplectic geometry.

\begin{ex}\label{ex-intro-1} Let $Y$ be a Legendrian knot of a compact contact 3-manifold $(M, \xi)$. For any $\phi \in \Aut(M, \xi)$, if $\phi(Y)$ is smooth, then it is also a Legendrian knot. \end{ex}

\begin{remark} It is a well-known fact (cf. Theorem 2.5 in \cite{Etn05}) that any knot, not necessarily Legendrian, can be $C^0$-approximated by Legendrian knots via adding more and more ``zigzags''. 
	In view of Example \ref{ex-intro-1}, this approximation cannot be of the form $\phi_k(Y)$ for a Legendrian knot $Y$ and a sequence $\{\phi_k\}_k$ of contactomorphisms converging in the sense of Definition \ref{aut-group}.  We believe that adding ``zigzags'' violates this type of convergence.  \end{remark}

In contrast with the existence of Hofer's metric (or the Hofer norm) on the Hamiltonian diffeomorphism group of a symplectic manifold, there does not exist a Finsler-type bi-invariant metric (or, equivalently, a conjugation-invariant norm) on the identity component of contactomorphism group $\Cont_0(M,\xi)$. In fact, any such non-trivial norm will be {\it discrete}, i.e., its values can not be arbitrarily close to zero, see \cite{FPR18, San15}, or the original argument in \cite{BIP06}. To remedy this, we can either restrict ourselves to a smaller group, or on the other hand drop the bi-invariance condition. The first approach is carried out by Banyaga-Donato \cite{BD06}, where a fine non-trivial bi-invariant metric, a modification of the classical Hofer's metric, is defined on the subgroup of strict contactomorphisms, i.e. those contactomorphisms which preserve a contact form;  the second approach is implemented by Shelukhin in \cite{She17}, where he defines a (non-conjugation-invariant) non-degenerate norm on $\Cont_0(M,\xi)$ denoted by $||-||_{\alpha}$ (depending on a prior given contact 1-form $\alpha$ defining $\xi$). In this paper, we follow Shelukhin, and study the pseudo-metric induced by  $||-||_{\alpha}$ on orbit spaces of subsets of $M$, which we refer to as the Shelukhin-Chekanov-Hofer pseudo-metric. The definition and basic properties of the Shelukhin-Hofer norm are recalled in Section \ref{sec-al-norm}.

\begin{dfn} \label{SCH-norm} Let $(M, \xi= \ker\alpha)$ be a contact manifold with a fixed contact 1-form $\alpha$. Fix a subset $N \subset M$, and denote by $\mathcal L(N)$ its orbit space under the action of $\Cont_0(M, \xi)$. Then for any $L_1, L_2 \in \mathcal L(N)$, define 
\begin{equation} \label{dfn-ca}
 \delta_{\alpha}(L_1, L_2) = \inf\{||\phi||_{\alpha} \,| \, \phi \in \Cont_0(M,\xi)\,\,\mbox{s.t.}\,\, \phi(L_1) = L_2 \}.
 \end{equation}
\end{dfn}
For brevity, we will call $\delta_{\alpha}$ the {\it $\alpha$-metric} on $\mathcal L(N)$. Basic properties of this $\alpha$-metric are explored in Section \ref{sec-al-norm}.

Certainly an interesting question is the non-degeneracy of $\delta_{\alpha}$ on $\cL(N)$. Using elementary arguments, in Section \ref{sec-ex-nd} we can show that for several cases $\delta_{\alpha}$ is indeed non-degenerate. Let us point out that different choices of contact forms give rise to equivalent metrics on $\cL(N)$ (see \eqref{eq-SH-norm-equiva} below) and in particular the (non-)degeneracy of $\delta_\alpha$ is independent of this choice.

In general, in order to approach this question, we use Usher's method in \cite{Ush14} based on the following interesting concept called {\it rigid locus} (an analogue to Definition 4.1 in \cite{Ush14}). 

\begin{dfn} \label{dfn-rl} Let $N$ be a subset of a contact manifold $(M, \xi= \ker\alpha)$. The rigid locus of $N$ is defined by 
\begin{equation} \label{rl}
R_N: = \{x \in N\,|\, \mbox{for any $\phi \in \bar{\Sigma}_N$, $\phi(x) \in N$}\}
\end{equation}
where $\Sigma_N$ is the stabilizer of $N$ in $\Cont_0(M, \xi)$ and $\bar{\Sigma}_N$ is its closure with respect to $||-||_{\alpha}$.
\end{dfn}

Then, using a fragmentation type result on $\Cont_0(M,\xi)$, a useful criterion as follows can be obtained and its proof is in given Section \ref{sec-7}.
\begin{prop} \label{rl-c} Let $N$ be a proper closed subset of $M$. Then 
\begin{itemize}
\item[(1)] If $R_N = N$, then $\delta_{\alpha}$ is non-degenerate. 
\item[(2)] If $R_N = \emptyset$, then $\delta_{\alpha} \equiv 0$.
\end{itemize}
\end{prop}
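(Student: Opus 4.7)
I would follow Usher's strategy in \cite{Ush14, Ush15}, adapted to the fact that Shelukhin's $\alpha$-norm is only right-invariant (rather than bi-invariant, as with Hofer's norm).

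\textbf{Part (1).} I would prove the contrapositive: $\delta_\alpha(L_1, L_2)=0 \Rightarrow L_1=L_2$. Given a sequence $\phi_n \in \Cont_0(M,\xi)$ with $\phi_n(L_1)=L_2$ and $\|\phi_n\|_\alpha \to 0$, and a fixed $\chi \in \Cont_0(M,\xi)$ with $\chi(L_1)=L_2$, the composition $\phi_n\chi^{-1}$ stabilizes $L_2$, and right-invariance of the metric induced by $\|-\|_\alpha$ gives
\[
d_\alpha(\phi_n\chi^{-1}, \chi^{-1}) \;=\; \|\phi_n\chi^{-1}\chi\|_\alpha \;=\; \|\phi_n\|_\alpha \;\longrightarrow\; 0,
\]
so $\chi^{-1} \in \bar{\Sigma}_{L_2}$. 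I would then transfer the hypothesis $R_N=N$ to $R_{L_2}=L_2$ via a conjugation argument: writing $L_2=\eta(N)$ gives $\Sigma_{L_2}=\eta\Sigma_N\eta^{-1}$, and using that conjugation by $\eta$ is a homeomorphism in $d_\alpha$ (which should follow from Shelukhin's explicit formula for $\|-\|_\alpha$ together with the bounded conformal factor of $\eta$), one obtains $\bar{\Sigma}_{L_2}=\eta\,\bar{\Sigma}_N\,\eta^{-1}$ and hence $R_{L_2}=\eta(R_N)=L_2$. Applying this to $\chi^{-1}$ yields $\chi^{-1}(L_2)\subseteq L_2$, i.e.\ $L_1\subseteq L_2$; the reverse inclusion follows by the symmetric argument, using $R_{L_1}=L_1$ together with $\|\phi_n^{-1}\|_\alpha = \|\phi_n\|_\alpha$.

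\textbf{Part (2).} I would aim for the stronger statement $\bar{\Sigma}_N = \Cont_0(M,\xi)$, from which the conclusion is immediate: for any $L=\eta(N) \in \mathcal{L}(N)$, choose $\sigma_n \in \Sigma_N$ with $d_\alpha(\sigma_n,\eta)\to 0$ (possible because $\eta\in\bar{\Sigma}_N$); then $\eta\sigma_n^{-1}(N)=L$ and $\|\eta\sigma_n^{-1}\|_\alpha = d_\alpha(\eta,\sigma_n)\to 0$, so $\delta_\alpha(N,L)=0$, and the triangle inequality upgrades this to $\delta_\alpha\equiv 0$ on $\mathcal{L}(N)$. To establish $\bar{\Sigma}_N = \Cont_0(M,\xi)$, I would invoke the fragmentation result for $\Cont_0(M,\xi)$ referenced in the statement: it suffices to place every $\phi \in \Cont_0(M,\xi)$ supported in a small open ball $U$ inside $\bar{\Sigma}_N$. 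If $U\cap N=\emptyset$ then already $\phi\in\Sigma_N$, so the only nontrivial case is $U\cap N\ne\emptyset$. Here I would use $R_N=\emptyset$: at every $x\in U\cap N$ there exists $\phi^x\in\bar{\Sigma}_N$ with $\phi^x(x)\notin N$, and approximating $\phi^x$ by stabilizers $\sigma_m\in\Sigma_N$ produces arbitrarily small-$\alpha$-norm contactomorphisms $\phi^x\sigma_m^{-1}$ that nontrivially move points of $N$. Combining these local motions via fragmentation (and iterating within the ball) yields the desired approximation of $\phi$ by stabilizers.

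\textbf{Main obstacle.} The delicate step is Part~(2): one must upgrade the \emph{pointwise} information $R_N=\emptyset$ into the global density statement $\bar{\Sigma}_N=\Cont_0(M,\xi)$. This is where the specific structure of Shelukhin's norm and the fragmentation lemma combine in a nontrivial way; in particular, one has to build ball-supported approximations inside $\bar{\Sigma}_N$ whose $d_\alpha$-distance to a prescribed $\phi$ is controlled. The main subtlety in Part~(1), by contrast, is the orbit-invariance of the rigid locus under the non-bi-invariant $\alpha$-norm, which hinges on continuity of conjugation in $d_\alpha$.
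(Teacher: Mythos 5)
Your Part (1) is correct, though more roundabout than necessary: the transfer of $R_N=N$ to $R_{L_2}=L_2$ via conjugation does work (it is exactly what the coordinate-change formula \eqref{conj}, i.e.\ item (4) of Proposition \ref{prop-a}, provides), but it essentially re-proves Lemma \ref{non-deg-a}. The paper instead deduces (1) in one line from that lemma: $R_N=N$ forces $\phi(N)\subset N$ for every $\phi\in\bar{\Sigma}_N$, hence $\phi(N)=N$ because $\bar{\Sigma}_N$ is a group, so $\bar{\Sigma}_N=\Sigma_N$ and non-degeneracy follows. (Also note you never actually need the approximating sequence $\phi_n$: with the definition \eqref{closure}, $\delta_\alpha(L_1,L_2)=0$ gives $\chi^{-1}\in\bar{\Sigma}_{L_2}$ directly.)

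Part (2), however, has a genuine gap, and it sits exactly where you flag the ``main obstacle''. The reduction via fragmentation to contactomorphisms supported in small open sets is the right move, but the crucial step --- showing that each point admits a neighborhood $U_x$ with the \emph{entire} group $\Cont_0(U_x)$ contained in $\bar{\Sigma}_N$ --- is not supplied by your sketch. Producing arbitrarily small-$\alpha$-norm elements $\phi^x\sigma_m^{-1}$ that ``nontrivially move points of $N$'' and then ``iterating within the ball'' gives no mechanism for assembling these motions into an approximation of a \emph{prescribed} $\phi$ supported in $U$; you would have to control both where the compositions send $N$ and their $d_\alpha$-distance to $\phi$, and nothing in the sketch does this. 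The paper's Lemma \ref{frag-lemma} closes the gap with a conjugation trick that requires no approximation at all: since $x\notin R_N$, pick $\phi_x\in\bar{\Sigma}_N$ with $\phi_x(x)\notin N$; since $N$ is closed, shrink $U_x$ so that $\phi_x(U_x)\cap N=\emptyset$; then $\Cont_0(\phi_x(U_x))\subset\Sigma_N$ literally (such maps fix a neighborhood of $N$ pointwise), and
$$\Cont_0(U_x)=\phi_x^{-1}\,\Cont_0(\phi_x(U_x))\,\phi_x\subset\bar{\Sigma}_N$$
because $\bar{\Sigma}_N$ is a group containing both $\phi_x$ and $\Sigma_N$. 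With this in hand, fragmentation over the cover $(M\setminus N)\cup\bigcup_{x\in N}U_x$ yields $\Cont_0(M,\xi)=\bar{\Sigma}_N$, and then $\delta_\alpha\equiv 0$ follows exactly as in the first paragraph of your Part (2). You should replace your approximation scheme by this conjugation argument.
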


One of the main results in this paper is the following dichotomy, similar to the symplectic case that can be found in Corollary 2.7 in \cite{Ush15}, or Theorem~2 in \cite{Che00}.
\begin{theorem} \label{dic} Let $N$ be a closed connected submanifold of $(M^{2n+1}, \xi = \ker \alpha)$ with $\dim N = n$. Then $\delta_{\alpha}$ is either non-degenerate or vanishes identically. \end{theorem}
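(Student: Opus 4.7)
The plan is to apply Proposition \ref{rl-c}: it suffices to prove that the rigid locus $R_N$ is either empty or all of $N$. Since $N$ is connected, I would prove this by showing that $R_N$ is both open and closed in $N$.

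Closedness is immediate: each $\phi\in\bar\Sigma_N$ is a homeomorphism of $M$, so $\phi^{-1}(N)\cap N$ is closed in $N$, and $R_N$ is the intersection of these sets over all $\phi\in\bar\Sigma_N$.

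For openness, I would reduce to the claim that $\Sigma_N$ acts locally transitively on $N$, meaning that for every $p\in N$ there is a neighborhood $V\subset N$ of $p$ such that for each $q\in V$ some $\eta\in\Sigma_N$ satisfies $\eta(p)=q$. Granting the claim, if $p\in R_N$ and $q\in V$, then for any $\phi\in\bar\Sigma_N$ the right-invariance of $||\cdot||_\alpha$ makes right translation by $\eta$ an isometry on $\Cont_0(M,\xi)$, so $\phi\circ\eta\in\bar\Sigma_N$; hence $\phi(q)=(\phi\circ\eta)(p)\in N$, giving $V\subset R_N$.

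The local transitivity claim is the main obstacle, and the hypothesis $\dim N=n$ enters essentially here. It reduces to constructing, for each $p\in N$ and each $v\in T_pN$, a compactly supported contact Hamiltonian $H\in C_c^\infty(M)$ such that $X_H$ is tangent to $N$ at every point of $N$ and $X_H(p)=v$; choosing $n$ such Hamiltonians with linearly independent values at $p$ and composing their time-$t$ flows then furnishes, via the inverse function theorem, the required local parametrization of $N$ near $p$ by elements of $\Sigma_N$. To build such an $H$, I would first analyze the tangency condition in a Darboux chart near $p$. At a Legendrian point $T_pN\subset\xi_p$, the Legendrian neighborhood theorem supplies coordinates $(q_i,x_i,z)$ with $\alpha=dz-\sum x_i\,dq_i$ in which the Legendrian part of $N$ is $\{x=0,z=0\}$, and the ansatz $H=\sum_i f_i(q)x_i$ yields $X_H(q,0,0)=-\sum_i f_i(q)\partial_{q_i}$, whose values exhaust $T_pN$. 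At a non-Legendrian point, writing $N$ as a graph over the $q$-plane gives a linear system for the first jet of $H$ along $N$ which still leaves $n$ free parameters, enough to realize any $v$. To globalize, I would multiply the locally defined $H$ by a cutoff identically equal to $1$ on a tubular neighborhood of $N\cap U$ inside the chart and vanishing outside a slightly larger set: since such a cutoff is constant in the directions transverse to $N$, its derivatives vanish along $N$, so that the equality $X_{\chi H}(p')=X_H(p')$ holds for every $p'\in N$ in the chart, and tangency of $X_H$ to $N$ is preserved throughout $N$. The delicate point is the non-Legendrian local computation together with the requirement that a single local Hamiltonian makes $X_H$ tangent to $N$ not only at $p$ but at every nearby point of $N$, including points that lie on the boundary between the Legendrian and non-Legendrian loci of $N$; once this is handled, the combination of openness, closedness, and Proposition \ref{rl-c} yields the desired dichotomy.
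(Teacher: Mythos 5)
Your overall frame---reduce to Proposition \ref{rl-c} and show that $R_N$ is both open and closed in the connected manifold $N$---is the same as the paper's, and the closedness observation is fine. The gap is in the openness step. You reduce openness to local transitivity of $\Sigma_N$ on $N$, to be achieved by constructing, for each $p\in N$ and $v\in T_pN$, a contact Hamiltonian $H$ whose vector field $X_H$ is tangent to $N$ at \emph{every} point of $N$ and has $X_H(p)=v$. This is exactly the step you flag as ``delicate,'' and it is not merely delicate: it fails. Take $M=\R^3$, $\alpha=dz-y\,dx$, and $N$ the curve $\gamma(t)=(t,0,t^3)$, which is Legendrian only at $t=0$ and transverse to $\xi$ elsewhere. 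One computes $X_H=-H_y\partial_x+(H_x+yH_z)\partial_y+(H-yH_y)\partial_z$, so tangency of $X_H$ to $N$ along $N$ forces $H_x(\gamma(t))=0$ and $H(\gamma(t))=-3t^2H_y(\gamma(t))$. Differentiating the second identity in $t$ and using the first gives $tH_z(\gamma(t))=-2H_y(\gamma(t))-t\tfrac{d}{dt}\bigl(H_y(\gamma(t))\bigr)$ for $t\neq 0$, and letting $t\to 0$ yields $H_y(0)=0$, hence $X_H(0)=0$. Thus \emph{every} contact vector field tangent to this $N$ vanishes at the origin, and your inverse-function-theorem construction of a local parametrization by elements of $\Sigma_N$ cannot get started there. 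Points where the Legendrian and non-Legendrian loci of $N$ meet are precisely where this degeneration occurs, so the difficulty you defer is the whole problem.

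The paper's proof circumvents this by imposing a genuinely weaker requirement: it never asks the flows to preserve $N$. It takes coordinate functions $F_i$ that merely \emph{vanish} on $N$ and shows, via the truncation argument $\beta_nG\to G$ in the proof of Proposition \ref{rl-lie} together with the fragmentation statement of Remark \ref{frag-rmk}, that the flow of any Hamiltonian vanishing on $R_N$ lies in $\bar{\Sigma}_N$ and therefore maps $R_N$ into $R_N$, even though it need not map $N$ to $N$. At a point $x$ with $T_xN\subset\xi_x$ one can choose $n$ functions vanishing on $N$ whose contact vector fields are linearly independent at $x$ (and $n+1$ such functions when $T_xN\not\subset\xi_x$, which then excludes $x$ from $R_N$ by a dimension count), so the map $(a_1,\dots,a_n)\mapsto\phi^1_{\sum a_iF_i}(x)$ embeds an $n$-ball into $R_N$, giving openness. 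To repair your argument you would need to replace ``$\eta\in\Sigma_N$'' by ``$\eta\in\bar{\Sigma}_N$'' and supply the membership of these non-$N$-preserving flows in $\bar{\Sigma}_N$; that approximation mechanism is the missing idea.
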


The most interesting submanifolds of dimension $n$ in a contact manifold $(M^{2n+1}, \xi)$ are Legendrian submanifolds. At present, it is unclear how $\delta_{\alpha}$ behaves in the orbit space of a Legendrian submanifold. We have the following conjecture. 

\begin{conj} \label{conj-1} Let $N$ be a closed connected Legendrian submanifold of a contact manifold $(M, \xi = \ker \alpha)$, then $\delta_{\alpha}$ is non-degenerate. \end{conj}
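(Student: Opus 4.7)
The plan is to invoke Proposition \ref{rl-c}: if $R_N = N$ then $\delta_\alpha$ is non-degenerate, while if $R_N = \emptyset$ then $\delta_\alpha \equiv 0$. Since $N$ is connected, the dichotomy reduces to showing that $R_N$ is both open and closed in $N$. Closedness is immediate: for any $\phi \in \bar\Sigma_N$ the map $\phi$ is a homeomorphism of $M$ and $N$ is closed, so
\[
R_N = \bigcap_{\phi \in \bar\Sigma_N} \bigl(\phi^{-1}(N) \cap N\bigr)
\]
is an intersection of closed subsets of $N$.

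The substantive step is openness of $R_N$. The strategy is to establish the following sub-claim: for each $x \in N$, the $\Sigma_N$-orbit $\Sigma_N \cdot x$ contains a neighborhood of $x$ in $N$. Granting this, suppose $x \in R_N$ and $y = \psi(x)$ with $\psi \in \Sigma_N$. For any $\phi \in \bar\Sigma_N$, write $\phi = \lim_k \phi_k$ with $\phi_k \in \Sigma_N$; right-invariance of Shelukhin's norm $\|-\|_\alpha$ yields $\phi_k \psi \to \phi\psi$, so $\phi\psi \in \bar\Sigma_N$, and hence $\phi(y) = (\phi\psi)(x) \in N$ because $x \in R_N$. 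This shows $y \in R_N$, so $R_N$ is open.

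The sub-claim is in turn equivalent to a local extension statement: for each $x \in N$ and each $v \in T_x N$, there exists a contact vector field $X$ on $M$, tangent to $N$, with $X(x) = v$. The dimension hypothesis $\dim N = n$ is central here, since it matches the Legendrian dimension. Locally near $x$, we use a contact Darboux-type chart adapted to $N$: at points where $T_xN \subset \xi_x$ the classical $1$-jet neighborhood theorem for Legendrians furnishes an explicit model in which any $v$ is realized as the restriction of an ambient contact vector field; at points where $T_xN \not\subset \xi_x$ the Reeb direction lies inside $TN$ and a similar normal form reduces the problem to a Legendrian-type extension in dimension $n-1$. In either chart, the desired $X$ is produced by choosing a contact Hamiltonian $H: M \to \R$ whose first-order jet along $N$ is prescribed by $v$ together with the characteristic data of $N$.

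I expect the main obstacle to be executing this extension uniformly across all points of $N$, and in particular handling transitions between Legendrian tangencies ($T_xN \subset \xi_x$) and transverse tangencies, so that the local models patch into a globally defined contact vector field tangent to $N$. The argument should closely parallel Usher's symplectic proof of Corollary 2.7 in \cite{Ush15}, adapted to contact topology by treating the Reeb direction separately and invoking the fragmentation-style ingredients already used in \cite{Ush14}.
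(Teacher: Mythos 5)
The statement you are addressing is stated in the paper as a \emph{conjecture}, and your argument does not supply the missing ingredient that makes it one. Everything you propose --- closedness of $R_N$, openness of $R_N$, and connectedness of $N$ --- yields only the alternative $R_N = N$ or $R_N = \emptyset$, which via Proposition \ref{rl-c} is exactly the dichotomy of Theorem \ref{dic}: $\delta_{\alpha}$ is either non-degenerate or vanishes identically. Nowhere do you exclude the second horn. To prove the conjecture you must in addition show $R_N \neq \emptyset$, equivalently exhibit some $L \in \mathcal L(N)$ with $\delta_{\alpha}(N, L) > 0$. In the symplectic arguments of Chekanov and Usher that you cite, this step is carried out by an energy--capacity inequality bounding the Hofer norm from below by a displacement energy; for $||-||_{\alpha}$ the available inequality (Proposition 10 in \cite{She17}) involves the conformal factors of the contactomorphisms, and these are not uniformly controlled when one takes the infimum over all $\phi$ with $\phi(L_1)=L_2$ in the definition of $\delta_{\alpha}$. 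This is precisely the obstruction described in Section \ref{sec-dm}, and your proposal leaves it untouched.

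Two remarks on the part you do carry out (which reproduces the proof of Theorem \ref{dic}). First, your openness step is stronger than needed and correspondingly harder: you ask for contact vector fields \emph{tangent to} $N$ realizing every $v \in T_xN$, i.e.\ for flows lying in $\Sigma_N$ itself, and you rightly worry about patching the Legendrian and transverse local models. The paper instead takes coordinate functions $F_i$ \emph{vanishing on} $N$; their flows $\phi^1_{\sum a_iF_i}$ need not preserve $N$, but they lie in $\bar{\Sigma}_N$ by the cut-off approximation $\beta_n G \to G$ from the proof of Proposition \ref{rl-lie}, hence carry a point of $R_N$ into $R_N$, and the map $(a_1,\dots,a_n) \mapsto \phi^1_{\sum a_iF_i}(x)$ is an embedded $n$-disk inside $N$; since $\dim N = n$ this disk is a neighborhood of $x$ in $N$ contained in $R_N$. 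Second, the membership $\phi\psi \in \bar{\Sigma}_N$ for $\phi \in \bar{\Sigma}_N$, $\psi \in \Sigma_N$ follows simply from the fact that $\bar{\Sigma}_N = \{\phi \,|\, \delta_{\alpha}(N,\phi(N))=0\}$ is a subgroup containing $\Sigma_N$ (as used in Lemma \ref{frag-lemma}); the appeal to invariance properties of $||-||_{\alpha}$ is unnecessary and delicate, since this norm is not conjugation-invariant.
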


In Section \ref{sec-dm}, we briefly point out the difficulty of proving Conjecture~\ref{conj-1}, which roughly speaking comes from the failure to find a uniform subset in the symplectization that can be displaced by all the lifts of contactomorphisms. On the other hand, we claim that if $N$ with dimension $n$ is non-Legendrian (i.e., not Legendrian at some point), then $\delta_{\alpha}$ vanishes identically. In fact, by Proposition \ref{nec}, if $\delta_{\alpha}$ is non-degenerate on $\mathcal L(N)$, then $N$ has to be contact coisotropic. But Legendrian submanifolds are the only contact coisotropic submanifolds of dimension $n$, so the claim follows directly from Theorem \ref{dic}.

\begin{remark} Soon after our work became public, a recent work by Usher \cite{Ush20} confirms Conjecture \ref{conj-1} for a certain family of Legendrian submanifolds where $N$ is hypertight. See Corollary 3.5 in \cite{Ush20} where Theorem 3.3 in \cite{Ush20} provides the key energy estimation. Here, hypertight means that there exists a contact form $\alpha$ defining $\xi$ such that every closed orbit of the Reeb vector field $R_{\alpha}$ is non-contractible and every Reeb chord for $N$ represents a non-trivial element of $\pi_1(M, N)$. \end{remark}

\subsection*{Acknowledgements} We are very grateful for useful communications with Emmanuel Opshtein, Leonid Polterovich, Sobhan Seyfaddini, Egor Shelukhin and Michael Usher. The first author is supported by the German Research Foundation (DFG), CRC/TRR 191 “Symplectic Structures in Geometry, Algebra and Dynamics”. The second author is supported by the European Research Council Advanced grant 338809.

\section{Contact preliminaries and notations}\label{sec-prelim}
Here we recall some basic notion from contact topology and introduce the notations we will use below.

Recall that a co-oriented contact structure on an odd-dimensional manifold $M^{2n+1}$ is a hyperplane distribution $\xi \subset TM$ which can be (globally) defined as a kernel of a $1$-form $\alpha$ such that $\alpha \wedge (d\alpha)^n$ is a volume form on $M$. Any such $1$-form which respects the co-orientation of $\xi$ is called a contact form. A contactomorphism of $(M, \alpha)$ is a diffeomorphism $\phi : M \to M$ which preserves $\xi$ together with its co-orientation, which in terms of a contact form $\alpha$ reads $\phi^*\alpha = e^g \alpha$ or some smooth function $g$, called the conformal factor of $\phi$ (with respect to $\alpha$). We denote the group of contactomorphisms of $(M, \xi)$ by $\Cont(M, \xi)$, and its identity component by ${\rm Cont}_0(M, \xi)$. 

The Reeb vector field associated with a contact form $\alpha$ is the unique vector field $R_\alpha$ on $M$ satisfying 
\begin{equation*}
\alpha(R_\alpha)=1 \quad \text{and} \quad i_{R_\alpha}d\alpha=0.
\end{equation*}

A smooth time-dependent function $F_t : M \to \R$, $t \in [0,1]$, called in this context a contact Hamiltonian, determines a unique vector field $X_{F_t}$ on $M$ by the requirements
\begin{equation}\label{CVF2}
\alpha(X_{F_t})=F_t \quad \text{and} \quad i_{X_{F_t}}d\alpha = dF_t(R_\alpha) \alpha - dF_t.
\end{equation}
If follows that $\cL_{X_{F_t}}\alpha$ is proportional to $\alpha$, and hence the flow of $X_{F_t}$ gives rise to a contact isotopy $\phi_t$. We say that $\{\phi_t\}$ is generated by the contact Hamiltonian $F_t$, and note that any contact isotopy is generated by a (unique) contact Hamiltonian. As an example, the Reeb flow is generated by the constant Hamiltonian $F_t \equiv 1$.

The symplectization $SM$ of $(M,\xi=\ker\alpha)$ is the symplectic manifold $SM=(\R \times M, d(e^\theta \alpha))$, where $\theta$ is the coordinate on $\R$. Let us point out that $SM$ is, up to a symplectomophsim, independent of the choice of $\alpha$. Any contactomorphism $\phi$ of $M$ lifts to a symplectomorphism $\tilde{\phi}$ of $SM$, defined by $$\tilde{\phi}(\theta, m)=(\theta-g(m), \phi(m)),$$ where $g$ is the conformal factor of $\phi$.
If  $\phi_t$  is a contact isotopy of $M$ generated by a contact Hamiltonian $F_t$, the lifted isotopy $\tilde{\phi}_t$ is a Hamiltonian isotopy of $SM$ generated by the Hamiltonian $\tilde{F}_t(\theta,m)=e^\theta F_t(m)$. The Hamiltonian vector field $X_{\tilde{F}_t }$ is related to the contact vector field $X_{F_t}$ by
\begin{equation}\label{eq-lift-vf}
X_{\tilde{F}_t }=-dF_t(R_\alpha) \frac{\partial}{\partial \theta}+X_{F_t} \in T\R \oplus TM \simeq T(SM).
\end{equation}

An central role in our story is played by the contact Poisson bracket. This is a Lie bracket on $C^\infty(M)$ defined by 
\begin{equation}\label{CPB}
\{F,G\}_{\alpha}=dF(X_G)-dG(R_\alpha)F.
\end{equation}
It is related to the Poisson bracket on the symplectic manifold $SM$ via (see Exercise 3.57 (iv) in \cite{MS95})
\begin{equation}\label{eq-pb-correspond}
\{\tilde{F},\tilde{G}\}=e^\theta \{F,G\}_{\alpha}.
\end{equation}
Let us emphasize that $\{-,-\}_{\alpha}$ depends on $\alpha$. Interested readers can work out the formula of $\{-,-\}_{\alpha'}$ in terms of $\{-,-\}_{\alpha}$ if $\alpha' = e^f \alpha$ for some $f \in C^{\infty}(M)$. 

\section{Contact coisotropic submanifold}
 
\subsection{Proof of Proposition \ref{C-P}} 

Given a smooth function $G:M \to \R$, we denote by $\phi_G^t$ the contact isotopy generated by $G$.

\begin{proof} [Proof of Proposition \ref{C-P}] Suppose $Y$ is coisotropic. Let $F|_Y = G|_Y = 0$. For any $p \in Y$, $\alpha_p(X_G(p)) = G(p) = 0$, so $X_G(p) \subset \xi_p$. For every $v \in T_pY \cap \xi_p$, by (\ref{CVF2}),
\[ (d\alpha)_p(X_G(p), v) = dG_p(R_{\alpha}) \alpha(v)- dG_p(v) = - dG_p(v) = 0 \]
where the final equality comes from our assumption that $G$ is constant along $Y$. So $X_G(p) \subset (T_pY \cap \xi_p)^{\perp_{d\alpha}} \subset T_pY \cap \xi_p \subset T_p Y$. Now by definition (\ref{CPB}), for any $p \in Y$, 
\begin{align*}
\{F,G\}_{\alpha}(p) & = \frac{d}{dt} F(\phi_G^t(p))|_{t=0} - dG_p(R_{\alpha}) \cdot F(p) \\
& = \frac{d}{dt} F(\phi_G^t(p))|_{t=0} - 0 = 0
\end{align*}
where the final equality comes from our assumption that $F$ is constant along $Y$. Thus we have proved $\{F, G\}_{\alpha}|_{Y} = 0$. 

Conversely, suppose $I_Y$ is a Lie subalgebra. If there exists some $p \in Y$ and $v \in (T_{p} Y \cap \xi_{p})^{\perp_{d\alpha}}$ but $v \notin T_{p} Y \cap \xi_{p}$, then one can find a function $G \in I_Y$ such that $(dG)_{p}(v) \neq 0$. Then by equation (\ref{CVF2}),
\[(d\alpha)_{p}(X_{G}(p), v) = -(dG)_{p}(v) \neq 0\]
which implies $X_{G}(p) \notin ((T_{p} Y \cap \xi_{p})^{\perp_{d\alpha}})^{\perp_{d\alpha}} = T_{p} Y \cap \xi_{p}$. Again, one can find a function $H \in I_Y$ such that $(dH)_{p}(X_{G}(p)) \neq 0$. Hence
\[ \{H, G\}_{\alpha}(p) = (dH)_{p}(X_{G}(p)) \neq 0,\]
and we get a contradiction. 
\end{proof}

A direct consequence of Proposition \ref{C-P} is

\begin{cor} \label{cont-coiso} Let $(M, \xi = \ker\alpha)$ be a contact manifold. Suppose $Y \subset M$ is a contact coisotropic submanifold and $\phi$ is a contactomorphism on $M$. Then $\phi(Y)$ is also contact coisotropic. \end{cor}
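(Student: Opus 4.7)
The plan is to verify the definition of contact coisotropic directly for $\phi(Y)$, exploiting the fact that $\phi^*\alpha = e^g\alpha$ for some smooth function $g\co M\to\R$. Fix $p\in Y$ and set $q=\phi(p)$. Since $\phi$ preserves the contact distribution, $d\phi_p(\xi_p)=\xi_q$, and since $\phi$ is a diffeomorphism sending $Y$ to $\phi(Y)$, $d\phi_p(T_pY)=T_q\phi(Y)$. Intersecting these gives
\[
T_q\phi(Y)\cap\xi_q \;=\; d\phi_p(T_pY\cap\xi_p).
\]

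The key step is to show that $d\phi_p|_{\xi_p}\co(\xi_p,d\alpha_p)\to(\xi_q,d\alpha_q)$ is a conformal linear symplectomorphism. Pulling back and using Leibniz,
\[
\phi^*(d\alpha)=d(e^g\alpha)=e^g\,dg\wedge\alpha+e^g\,d\alpha.
\]
For $u,v\in\xi_p$ the wedge term $(dg\wedge\alpha)_p(u,v)$ vanishes because $\alpha|_\xi\equiv 0$, whence
\[
d\alpha_q(d\phi_p\,u,\,d\phi_p\,v)=e^{g(p)}\,d\alpha_p(u,v)\quad\text{for all }u,v\in\xi_p.
\]
A conformal symplectomorphism of linear symplectic spaces preserves symplectic orthogonal complements (scaling by a nonzero constant does not change the orthogonal), so for any subspace $V\subset\xi_p$ one has $d\phi_p(V^{\perp_{d\alpha}})=(d\phi_p V)^{\perp_{d\alpha}}$.

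Applying this to $V=T_pY\cap\xi_p$ and invoking the contact coisotropic condition on $Y$ at $p$,
\[
(T_q\phi(Y)\cap\xi_q)^{\perp_{d\alpha}} = d\phi_p\bigl((T_pY\cap\xi_p)^{\perp_{d\alpha}}\bigr) \subset d\phi_p(T_pY\cap\xi_p) = T_q\phi(Y)\cap\xi_q,
\]
which is exactly the contact coisotropic condition for $\phi(Y)$ at $q$. Since $p\in Y$ was arbitrary, $\phi(Y)$ is contact coisotropic.

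An alternative route would go through Proposition \ref{C-P}, noting that $F\mapsto F\circ\phi$ is a bijection $I_{\phi(Y)}\to I_Y$ and trying to show it intertwines $\{-,-\}_c$ well enough to send Lie subalgebras to Lie subalgebras; this requires working out how $X_G$ and $R_\alpha$ transform under the conformal change $\phi^*\alpha=e^g\alpha$ and is strictly more computational than the geometric argument above. The only substantive point in the geometric approach is the identification of the conformal factor on $d\alpha|_\xi$, and this is immediate from $\alpha|_\xi=0$, so I do not anticipate any real obstacle.
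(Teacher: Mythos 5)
Your proof is correct and coincides with the paper's own ``shorter'' direct proof (given in the remark following Lemma \ref{res-CVF}): both arguments rest on the identity $T_{\phi(p)}(\phi(Y))\cap\xi_{\phi(p)}=d\phi_p(T_pY\cap\xi_p)$ together with the observation that $d\phi_p|_{\xi_p}$ is a conformal symplectomorphism, hence preserves $d\alpha$-orthogonal complements. The paper's primary proof instead routes through Proposition \ref{C-P} and the bracket identity $\{e^{-g}\phi^*F,e^{-g}\phi^*G\}_c=e^{-g}\phi^*\{F,G\}_c$ --- exactly the alternative you mention and reasonably set aside as more computational.
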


This corollary easily follows from the following lemma which is of interest itself. 

\begin{lemma} \label{res-CVF} Suppose $\phi$ is a contactomorphism on $(M, \xi = \ker\alpha)$ with conformal factor $g:M \to \R$. For any functions $F, G \in C_c^{\infty}(M)$, 
\[   \{e^{-g} \phi^*F, e^{-g} \phi^*G\}_{\alpha} = e^{-g} \phi^*\{F, G\}_{\alpha}.\]
\end{lemma}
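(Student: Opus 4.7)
The plan is to deduce the identity from a single naturality statement about contact vector fields. First I would prove the key intermediate fact
\[
X_{e^{-g}\phi^*G} \;=\; d\phi^{-1}\cdot(X_G\circ\phi),
\]
i.e.\ the pullback vector field $\phi^{*}X_G$ is precisely the contact vector field whose Hamiltonian is $\tilde G := e^{-g}\phi^*G$. Since $\phi$ (hence $\phi^{-1}$) is a contactomorphism, $\phi^{*}X_G$ automatically preserves the contact distribution, so by the uniqueness part of Remark \ref{CVF1} it suffices to verify $\alpha(\phi^{*}X_G)=\tilde G$. This is a one-line computation from $\phi^*\alpha=e^g\alpha$, rewritten as $\alpha_{\phi(p)}(d\phi_p v)=e^{g(p)}\alpha_p(v)$ and applied to $v=d\phi^{-1}_{\phi(p)}X_G(\phi(p))$: it gives $\alpha_p((\phi^{*}X_G)_p)=e^{-g(p)}G(\phi(p))=\tilde G(p)$.

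Next I would invoke naturality of the Lie bracket of vector fields under pullback by a diffeomorphism, $\phi^{*}[X_F,X_G]=[\phi^{*}X_F,\phi^{*}X_G]$, combined with the identity $[X_F,X_G]=X_{\{F,G\}_c}$ (the infinitesimal content of $\{-,-\}_c$ being the Lie bracket on contact Hamiltonians, referenced from MS95 Exercise 3.5.2; its sign is fixed once and for all by comparing (\ref{CPB}) with $\alpha([X_F,X_G])$ computed via $\mathcal{L}_{X_F}\alpha=dF(R_\alpha)\alpha$). Applying the first step with $G$ replaced by $\{F,G\}_c$ yields
\[
X_{e^{-g}\phi^*\{F,G\}_c} \;=\; \phi^{*}X_{\{F,G\}_c} \;=\; [\phi^{*}X_F,\phi^{*}X_G] \;=\; [X_{\tilde F},X_{\tilde G}] \;=\; X_{\{\tilde F,\tilde G\}_c}.
\]
Since the map $H\mapsto X_H$ is injective (indeed $\alpha(X_H)=H$ recovers $H$), the equality of contact vector fields forces $\{\tilde F,\tilde G\}_c = e^{-g}\phi^*\{F,G\}_c$, which is the desired identity.

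The main (and essentially only) non-formal step is the first one: one has to unpack $\phi^*\alpha=e^g\alpha$ at the level of tangent vectors and simultaneously observe that $\phi^{*}X_G$ is still a contact vector field, so that Remark \ref{CVF1} applies. Once this intermediate identity is in place, the rest of the argument is purely structural. If one wished to bypass the identification $[X_F,X_G]=X_{\{F,G\}_c}$, one could instead expand both sides of the lemma directly via (\ref{CPB}) and (\ref{CVF2}); the extra ingredient then needed is the decomposition of $d\phi_p R_\alpha(p)$ in the splitting $T_{\phi(p)}M=\R\cdot R_\alpha(\phi(p))\oplus \xi_{\phi(p)}$, whose Reeb component equals $e^{g(p)}$ by the same type of computation as in the first step. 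The structural approach above is, however, considerably cleaner.
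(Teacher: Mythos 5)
Your proof is correct, and it takes a genuinely different route from the paper's. The paper passes to the symplectization: it lifts $F,G$ to $e^{\theta}F, e^{\theta}G$ on $M\times\R$, lifts $\phi$ to the symplectomorphism $\tilde\phi(m,\theta)=(\phi(m),\theta-g(m))$, and then combines invariance of the symplectic Poisson bracket under $\tilde\phi$ with the relation $\{e^{\theta}F,e^{\theta}G\}=e^{\theta}\{F,G\}_c$ from \cite{MS95}. You instead stay on $M$ and derive everything from the transformation law $\phi^{*}X_G=X_{e^{-g}\phi^{*}G}$ together with naturality of the Lie bracket and the identity $[X_F,X_G]=\pm X_{\{F,G\}_c}$; as you note, the universal sign cancels since it enters once on each side, and injectivity of $H\mapsto X_H$ (via $\alpha(X_H)=H$) closes the argument. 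Two small points to make fully explicit: first, your appeal to the uniqueness in Remark \ref{CVF1} requires the (one-line, Cartan-formula) observation that any vector field $X$ with $\mathcal{L}_X\alpha=h\alpha$ and $\alpha(X)=\tilde G$ automatically satisfies \eqref{CVF2} with $h=d\tilde G(R_\alpha)$, so that verifying $\alpha(\phi^{*}X_G)=\tilde G$ really does suffice; second, the identity $[X_F,X_G]=\pm X_{\{F,G\}_c}$ should be checked against the paper's convention \eqref{CPB} (this is essentially the content of the Lie-algebra statement the paper cites from \cite{MS95}, so it is fair to quote, but the sign bookkeeping there is delicate). The trade-off is clear: the paper's proof is shorter because it delegates to known symplectization facts that are reused throughout the paper, while yours is self-contained on $M$ and produces as a by-product the naturality statement $\phi^{*}X_G=X_{e^{-g}\phi^{*}G}$, which is of independent interest (for instance it immediately re-proves Corollary \ref{cont-coiso}).
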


Assuming this lemma, we have

\begin{proof} [Proof of Corollary \ref{cont-coiso}] Let $g$ be the conformal factor of contactomorphism $\phi$. For any two functions $F, G$ such that $F|_{\phi(Y)} = G|_{\phi(Y)} = 0$, we have $(e^{-g} \phi^* F)|_Y = (e^{-g} \phi^* G)|_Y = 0$. Then since $Y$ is coisotropic, by Proposition~\ref{C-P} and Lemma \ref{res-CVF}, 
\[ \{e^{-g} \phi^*F, e^{-g} \phi^* G\}_{\alpha} |_Y = 0 = e^{-g} \phi^*\{F, G\}_{\alpha}|_Y. \]
Hence, $\{F, G\}_{\alpha}|_{\phi(Y)} = 0$. Then by Proposition \ref{C-P} again, $\phi(Y)$ is coisotropic. \end{proof}

Now let us prove Lemma \ref{res-CVF}. 

\begin{proof}[Proof of Lemma \ref{res-CVF}] First, note that the lifts $\tilde{F}$ and $\tilde{\phi}$ of $F$ and $\phi$, respectively, to $SM$ (see Section \ref{sec-prelim}) satisfy
\[ (\tilde{\phi})^* \tilde{F} = \tilde{F'} \]
where $F': M \to \R$ is defined by $F'(m) = e^{-g(m)} F(\phi(m))$. 
Then by symplectic invariance property of the (symplectic) Poisson bracket on the symplectization and relation \eqref{eq-pb-correspond}, we compute 
\begin{align*}
    e^{\theta}\{F', G'\}_{\alpha} & = \{\tilde{F}', \tilde{G}'\}\\
    & = \{(\tilde{\phi})^*\tilde{F},(\tilde{\phi})^*\tilde{G}\}\\
    & = (\tilde{\phi})^*\{\tilde{F}, \tilde{G}\}\\
    & = (\tilde{\phi})^*(e^{\theta}\{F, G\}_{\alpha})\\
    & = e^{\theta-g}\phi^*\{F, G\}_{\alpha}.
\end{align*}
That is $e^{\theta} \{F', G'\}_{\alpha} = e^{\theta - g}\phi^*\{F, G\}_{\alpha}$. Therefore, $\{F',G'\}_{\alpha} = e^{-g} \phi^*\{F, G\}_{\alpha}$ which is the desired conclusion. \end{proof}

\subsection{Examples} 

\begin{ex} Suppose $Y \subset M^{2n+1}$ is a Legendrian submanifold. For any $p \in Y$, $T_pY \cap \xi_p = T_p Y$. Moreover, since $T_pY$ is a Lagrangian subspace with respect to $d\alpha|_{\xi}$, $(T_pY \cap \xi_p)^{\perp_{d\alpha}} = (T_p Y)^{\perp_{d\alpha}} = T_p Y = T_pY \cap \xi_p$. So in particular, $Y$ is contact coisotropic. Note that by Definition \ref{dfn-Hua15}, $\dim(Y) \geq n$, so Legendrian submanifolds provide the lowest dimension within all the coisotropic submanifolds. In fact, a coisotropic submanifold of dimension $n ( = \frac{1}{2} (\dim M - 1))$ is indeed a Legendrian submanifold. \end{ex}

\begin{ex} \label{hyper} Let $\Sigma$ be a hypersurface of a contact manifold $(M, \xi = \ker \alpha)$. Then $\Sigma$ is contact coisotropic. In fact, for any point $x \in \Sigma$, there are two cases. One is $T_x \Sigma = \xi_x$, then $(T_x \Sigma \cap \xi_x)^{\perp_{d\alpha}} = \{0\} \subset T_x \Sigma \cap \xi_x$. The other is $T_x \Sigma$ is transversal to $\xi_x$, then by dimension counting, $T_x \Sigma \cap \xi_x$ is a hyperplane of $\xi_x$, therefore, symplectic coisotropic with respect to $d\alpha|_{\xi}$. Thus we get the conclusion. \end{ex}

\begin{ex} \label{Pre-Lagrangian} Recall that a submanifold $L \subset M^{2n+1}$ of dimension $n+1$ is called a {\it pre-Lagrangian} if it admits a Lagrangian lift in the symplectization of $M$, that is, a Lagrangian submanifold $\hat{L} \subset SM$ such that the canonical projection $\pi: SM \to M$ restricts to a diffeomorphism $\hat{L} \to L$. We claim any pre-Lagrangian submanifold is contact coisotropic. In fact, let $L \subset M$ be a pre-Lagrangian and $F, G \in C_c^{\infty}(M)$ such that $F|_{L} = G|_{L} =0$. Then their lifts $\tilde{F}, \tilde{G}$ to $SM$ satisfy $\tilde{F}|_{\hat{L}}=\tilde{G}|_{\hat{L}}=0$, and hence by \eqref{eq-pb-correspond}, 
\[ 0 = \{\tilde{F}, \tilde{G}\}|_{\hat{L}}  = e^{\theta} \cdot \{F, G\}_{\alpha}|_{L} \]
which implies $\{F, G\}_{\alpha} =0$ on $L$. By Proposition \ref{C-P}, $L$ is contact coisotropic. \end{ex}

To end this section, we want to address the following point. Usually Legendrian submanifolds are viewed as analogues to Lagrangian submanifolds in symplectic topology. However, we use the following concept to illustrate that, to some extent, pre-Lagrangian submanifolds are closer to Lagrangian submanifolds than Legendrian submanifolds.  

\begin{dfn} \label{dfn-inf-dis}
A submanifold $N$ of a contact manifold $(M, \xi)$ is called {\it infinitesimally displaceable} if there exists a function $H: M \to \R$ such that for any point $x \in N$, the contact vector field $X_H(x) \notin T_xN$.  \end{dfn}

\begin{ex} \label{ex-inf-dis} Any compact Legendrian submanifold is infinitesimally displaceable simply by flowing along a Reeb vector field for a sufficiently small amount of time. This is in sharp contrast to the well-known fact that any Lagrangian submanifold is  {\it not} infinitesimally displaceable (cf. Proposition 4.8 in \cite{Ush14}). \end{ex}

\begin{lemma} Any compact pre-Lagrangian submanifold is not infinitesimally displaceable. \end{lemma}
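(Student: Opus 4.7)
The plan is to lift the question to the symplectization $(SM,\omega)=(M\times\R_\theta,\, d(e^\theta\alpha))$ and reduce it to the corresponding Lagrangian fact already recorded in Example \ref{ex-inf-dis}. By definition, $L$ admits a Lagrangian lift $\hat L\subset SM$, and since $p|_{\hat L}\co\hat L\to L$ is a local diffeomorphism between manifolds of equal dimension, I will choose $\hat L$ to be the graph of a function $f\co L\to\R$ for which $e^f\alpha|_L$ is closed (this is precisely the condition that such a graph is Lagrangian in $SM$). With this choice, $p|_{\hat L}$ is a diffeomorphism, so $\hat L$ is compact whenever $L$ is.

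The second step is to unwind how contact vector fields on $M$ relate to Hamiltonian vector fields on $SM$. Given any $H\in C^\infty(M)$, I will work with the lifted Hamiltonian $\tilde H := e^\theta H$ on $(SM,\omega)$. A direct calculation from $\iota_{X_{\tilde H}}\omega=-d\tilde H$, combined with the characterization of $X_H$ in Remark \ref{CVF1}, should give
\[
X_{\tilde H} \;=\; X_H \;-\; dH(R_\alpha)\,\partial_\theta,
\]
so in particular $dp\circ X_{\tilde H} = X_H\circ p$. The question then becomes: find $\hat x\in\hat L$ with $X_{\tilde H}(\hat x)\in T_{\hat x}\hat L$, since projecting by $p$ will deliver $X_H(p(\hat x))\in T_{p(\hat x)}L$.

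For the final step, the existence of such an $\hat x$ is exactly the Lagrangian case of Example \ref{ex-inf-dis} applied to the compact Lagrangian $\hat L$: because $\hat L$ is compact, $\tilde H|_{\hat L}$ has a critical point $\hat x$, and then $\iota_{X_{\tilde H}}\omega=-d\tilde H$ together with $(T_{\hat x}\hat L)^{\perp_\omega}=T_{\hat x}\hat L$ force $X_{\tilde H}(\hat x)\in T_{\hat x}\hat L$. The hard part I anticipate is the first step, namely controlling the Lagrangian lift: a priori the lift guaranteed by the definition of a pre-Lagrangian could be a nontrivial — possibly noncompact — cover of $L$, and the argument genuinely needs compactness to produce a critical point of $\tilde H|_{\hat L}$. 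Passing to the graphlike lift described above is what bypasses this issue; the remaining computation of $X_{\tilde H}$ is a routine bookkeeping exercise on the symplectization.
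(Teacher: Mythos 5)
Your argument is correct and is essentially the paper's proof: pass to the symplectization, use the formula $X_{\tilde H} = X_H - dH(R_\alpha)\,\partial_\theta$ for the lift $\tilde H = e^\theta H$, and reduce to the fact that a compact Lagrangian (here the graph-like lift of the pre-Lagrangian) is not infinitesimally displaceable, which you re-derive via a critical point of $\tilde H$ restricted to the lift. The only spot where the paper is more careful is the existence of the \emph{compact} graph-like Lagrangian lift, which it gets by citing Proposition 2.2.2 of \cite{EHS95} (the lift is the graph of a closed one-form) rather than from the bare assertion that $p$ restricted to an arbitrary Lagrangian lift is a local diffeomorphism admitting a global section.
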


\begin{proof} 
Suppose $N$ is a compact pre-Lagrangian of $M$, then there exists a compact Lagrangian submanifold $L \subset SM$ such that $\pi(L) = N$ where $\pi:  SM \to M$ is the canonical projection. Moreover, for any Hamiltonian function $H$ on $M$ and its lift $\tilde{H}$ to $SM$, the corresponding contact and Hamiltonian vector field satisfy, in view of \eqref{eq-lift-vf}, that $\pi_*(X_{\tilde H}) = X_H$. If $H$ is a function such that $X_H(p) \notin T_pN$ for any $p \in N$, then $\tilde{H}$ is a function such that $X_{\tilde{H}}(x) \notin T_x L$ for any $x \in L$. This implies that this (compact) Lagrangian submanifold $L$ is infinitesimally displaceable, which is a contradiction.  \end{proof}

\begin{remark} In fact, in the contact topology set-up, Legendrian submanifolds are not the only coisotropic submanifolds that break the infinitesimally displaceable rule. In a 3-dimensional contact manifold $(M, \xi)$, there exists a special class surfaces, called {\it convex} surface, introduced by Giroux in \cite{Gir91}, which admits a transversal contact vector field. Hence, this provides a family of contact coisotropic submanifolds, not Legendrian, which are also infinitesimally displaceable. In particular, any such convex surface can never be a pre-Lagrangian. It is one of Giroux's great achievements that a $C^{\infty}$-generic closed embedded surface is convex (see Proposition II 2.6 in \cite{Gir91}), therefore any pre-Lagrangian submanifold  of a contact $3$-manifold can be $C^{\infty}$-perturbed into a convex surface, which shows that infinitesimal non-displaceability is not stable at all.\end{remark}

\section{Coisotropic correspondence} \label{sec-cor}
\subsection{Symplectization} 
Let $L$ be a Legendrian submanifold of a contact manifold $(M, \xi = \ker \alpha)$. By Exercise 3.57 (i) in \cite{MS95}, its lift $\R \times L$ is a Lagrangian submanifold in the symplectization $SM=(\R \times M, d(e^{\theta} \alpha))$. Note that both Legendrian submanifolds and Lagrangian submanifolds are special cases of coisotropic submanifolds in the contact and symplectic topology, respectively. Here we have the following generalized result. 

\begin{prop} \label{corres} Let $Y$ be a submanifold of a contact manifold $(M, \xi = \ker \alpha)$. Then $Y$ is a contact coisotropic submanifold of $(M, \xi = \ker \alpha)$ if and only if $\R \times Y$ is a symplectic coisotropic submanifold of the symplectization $SM=(\R \times M, \omega = d(e^{\theta} \alpha))$. 
\end{prop}

\begin{proof} [Proof of Proposition \ref{corres}]
We claim that, for any submanifold $Y \subset M$ and any $p \in Y$ and $\theta \in \R$,
\begin{equation}\label{eq-orthogonal}
\pi_*((T_\theta\R \times T_p Y)^{\perp_\omega}) = (T_p Y \cap \xi_p)^{\perp_{d\alpha}},
\end{equation}
where $\pi \colon SM \to M$ is the canonical projection. Assuming \eqref{eq-orthogonal}, we note that if $Y \subset M$ is contact coisotropic, then
$$
\pi_*((T_\theta\R \times T_p Y)^{\perp_\omega}) = (T_p Y \cap \xi_p)^{\perp_{d\alpha}} \subset T_p Y,
$$
and hence
$$
(T_\theta\R \times T_p Y)^{\perp_\Omega} \subset T_\theta\R \times T_p Y = T_{(\theta, p)} (\R \times Y),
$$
proving that $\R \times Y$ is a symplectic coisotropic submanifold of $SM$. Conversely, if $\R \times Y \subset SM$ is symplectic coisotropic, we get that
$$
(T_pY \cap \xi_p)^{\perp_{\alpha}} = \pi_*((T_\theta\R \times T_p Y)^{\perp_\omega}) \subset \pi_*(T_\theta\R \times T_p Y) = T_pY,
$$
proving that $Y$ is a contact coisotropic submanifold. It remains to prove \eqref{eq-orthogonal}. Note that $v \in (T_p Y \cap \xi_p)^{\perp_{d\alpha }}$ if 
\begin{equation}\label{eq-perp_alpha}
\alpha(v)=0 \quad \text{and}\quad \forall u\in T_pY,\, \alpha(u)=0\Rightarrow d\alpha(u,v)=0.
\end{equation}
On the other hand, $(s,v) \in (T_\theta\R \times T_p Y)^{\perp_\omega}$ if
$$
\forall u\in T_pY, \, t\in T_\theta\R,\,\,\, \omega((t,u),(s,v)) = e^\theta(t\alpha(v)-s\alpha(u)+d\alpha(u,v))=0.
$$

Note that the latter condition holds for all $t\in T_\theta\R$, which implies $\alpha(v)=0$, so we can rewrite this condition as
\begin{equation}\label{eq-perp-Omega}
\alpha(v)=0  \quad \text{and}\quad \forall u \in T_pY, \,d\alpha(u,v) = s\alpha(u).
\end{equation}
We now consider separately two cases for the point $p\in Y$: 
\begin{itemize}
    \item [I.] If $T_pY \subset \xi_p$, that is $\alpha(u)=0$ for all $u \in T_pY$, we note that conditions \eqref{eq-perp_alpha} and \eqref{eq-perp-Omega} both reduce to $d\alpha(u,v)=0$ for all $u \in T_pY$ and hence 
$$(T_\theta\R \times T_pY)^{\perp_{\omega}} = T_\theta\R \times (T_pY \cap \xi_p)^{\perp_{d\alpha}}.$$
\item[II.] If $T_pY \not\subset \xi_p$, then $\dim(T_pY / (T_p Y \cap \xi_p))=1$. Note that if $v$ satisfies \eqref{eq-perp_alpha}, the function $d\alpha (\cdot, v)$ descends to $T_pY / (T_pY \cap \xi_p)$, and therefore the number $s(v) = \frac{d\alpha(u,v)}{\alpha(u)}$ is independent of $u \in T_pY$ such that $\alpha(u) \neq 0$. It follows easily that 
$$(T_\theta\R \times T_pY)^{\perp_{\omega}} = \{(s(v), v) \,|\, v \in (T_pY \cap \xi_p)^{\perp_{d\alpha}} \} .$$
\end{itemize}
In both cases, we see that \eqref{eq-orthogonal} holds, which completes the proof.
\end{proof}

\subsection{Prequantization} Recall that if a closed symplectic manifold $(M, \omega)$ satisfies the condition $[\omega] \in H^2(M; \Z)$, then there exists a principal $S^1$-bundle $P \xrightarrow{\pi} M$ such that $P$ has a contact structure $(P, \xi= \ker \alpha)$ for some $\alpha$ satisfying $d\alpha = \pi^*\omega$. This contact manifold $(P, \xi)$ is called a {\it prequantization of $(M, \omega)$}. It is easy to check that for any point $a \in P$, we have a decomposition $T_aP = \xi_a \oplus V_a$ where $V_a$ is the vertical subspace corresponding to the $S^1$-direction (viewing $\alpha$ as a connection form). Moreover, via $d\pi$, we can identify 
\[ \xi_a \simeq T_x M \,\,\,\,\mbox{for any $a \in \pi^{-1}(x)$}.\]
The relation between contact Hamiltonian vector fields and symplectic Hamiltonian vector fields is expressed by the following equation. For any Hamiltonian function $F$ on $M $, and any point $a \in P$,  denoting $x = \pi(a)$, one has 
\begin{equation} \label{cs-vf}
X_{\pi^*F}(a) = \pi_*^{-1}(X_F(x)) \oplus v 
\end{equation}
where $v$ is the unique vector in $V_a$ such that $\alpha(v) = F(x)$ (cf. Section 1.3 in \cite{BS10}). 

\begin{prop} \label{pre-cor}
Let $\Lambda \subset M$ be a submanifold. Then $\Lambda$ is symplectic coisotropic submanifold of $M$ if and only if $\pi^{-1}(\Lambda)$ is a contact coisotropic submanifold of $P$. 
\end{prop}

\begin{proof} For $a \in \pi^{-1}(\Lambda)$, via $d\pi$, we can identify 
\[ T_a(\pi^{-1}(\Lambda)) \cap \xi_a \simeq (T_{\pi(a)} \Lambda \oplus V_a) \cap T_xM= T_{\pi(a)} \Lambda\]
and $d\alpha$ is identified with $\omega$. 

\medskip
If $\Lambda$ is symplectic coisotropic, then 
\[ (T_a(\pi^{-1}\Lambda) \cap \xi_a)^{\perp_{d\alpha}} =(T_{\pi(a)} \Lambda)^{\perp_{\omega}} \subset T_{\pi(a)}\Lambda \simeq T_a(\pi^{-1}(\Lambda)) \cap \xi_a. \]
Therefore, $\pi^{-1}(\Lambda)$ is contact coisotropic. The same relation above also shows that if $\pi^{-1}(\Lambda)$ is contact coisotropic, then $\Lambda$ is symplectic coisotropic. 
\end{proof}

\begin{remark} Part of the proofs of Proposition \ref{corres} and Proposition \ref{pre-cor} can be done via the criterion from symplectic and contact Poisson brackets. For instance, the equation (\ref{eq-pb-correspond}) above easily implies that if $\R \times Y$ is symplectic coisotropic in the symplectization $SM$, then $Y$ is contact coisotropic in the base contact manifold $M$. For another instance, one can show that, for any two functions $F, G \in C^{\infty}(M)$, their lifts to the prequantization $\pi^*F, \pi^*G$ satisfy 
\[ \{\pi^*F, \pi^*G\}_{\alpha} = \pi^*\{F, G\}. \]
This is analogous to the equation (\ref{eq-pb-correspond}) above, and it implies that if $\pi^{-1}(\Lambda)$ is contact coisotropic in the prequantization $P$, then $\Lambda$ is a symplectic coisotropic in the base symplectic manifold $M$. 
\end{remark}

\begin{remark} Since the  pre-image $\pi^{-1}(\Lambda)$ always contains the full $S^1$-fibres, we know that $\dim \pi^{-1}(\Lambda) = \dim \Lambda +1$. Moreover, since the $S^1$-direction is always transversal to the contact hyperplane $\xi$ of $P$, if $\Lambda$ is symplectic coisotropic, then $\pi^{-1} (\Lambda)$ is always a strictly coisotropic submanifold with respect to the contact 1-form $\alpha$ on $P$ (see Definition 1.6 in \cite{BZ15}). A detailed study of these submanifolds was carried out in \cite{BZ15}. 
\end{remark}

\section{Proof of Theorem \ref{rigidity}}
Recall that a symplectic homeomorphism $\phi$ is a homeomorphism which is a $C^0$-limit of symplectomorphisms. The collection of all symplectic homeomorphisms of a symplectic manifold $(M, \omega)$ is denoted as $\Sympeo(M, \omega)$. The famous Gromov-Eliashberg theorem (see \cite{Eli87}) states that 
\begin{equation} \label{GE}
\Sympeo(M, \omega) \cap \Diff(M) = \Symp(M, \omega).
\end{equation}
In the contact topology set-up, one can define 

\begin{dfn} \label{aut-group} (Definition 6.8 in \cite{MS15}) A homeomorphism $\phi$ of a contact manifold $(M, \xi = \ker \alpha)$ is called {\it a topological automorphism of the contact structure $\xi$} if there exists a sequence of contactomorphisms $\phi_k \in \Cont(M, \xi)$ with corresponding conformal factors $g_k: M \to \R$ such that 
\begin{itemize}
\item{} $\phi_k \xrightarrow{C^0} \phi$;
\item{} $g_k \xrightarrow{C^0} g$ for some continuous function $g: M \to \R$.
\end{itemize}
Here the $C^0$-topology on the group of homeomorphisms of $M$ is induced by the metric $d_{C^0}(\phi, \psi) = \max_{M} d_M(\phi(x), \psi(x))$ where $d_M$ is a distance function defined with respect to a fixed Riemannian metric on $M$. The collection of all such homeomorphisms is denoted by $\Aut(M, \xi)$. 
\end{dfn}
Then an analogue conclusion to (\ref{GE}) is Theorem 1.3 in \cite{MS15} which states that 
\begin{equation} \label{MS}
\Aut(M, \xi) \cap \Diff(M) = \Cont(M, \xi).
\end{equation}

Conclusions (\ref{GE}) and (\ref{MS}) can be implied by their ``relative'' counterparts, i.e. conclusions involving Lagrangian submanifolds and Legendrian submanifolds, respectively. 

\begin{ex} \label{ex-GE} Let $\phi \in \Symp(M, \omega)$. Consider the embedding $\Phi_{\phi}: M \to M \times M$ given by $x \to (x, \phi(x))$. Under the symplectic structure $\pi^*_1 \omega \oplus \pi_2^*(-\omega)$ on $M \times M$, ${\rm Im}(\Phi_{\phi})$ is a Lagrangian submanifold. In fact, for $\phi \in \Diff(M)$, ${\rm Im}(\Phi_{\phi})$ is Lagrangian if and only if $\phi$ is symplectic. Then the conclusion (\ref{GE}) is implied by the following statement \cite{LS94}: if $\phi \in \Sympeo(M, \omega)$ and ${\rm \Phi_{\phi}}$ is smooth, then ${\rm \Phi_{\phi}}$ is Lagrangian. Indeed, for any $\phi \in \Sympeo(M, \omega) \cap \Diff(M)$, $\Phi_{\phi}$ is smooth. Then ${\rm \Phi_{\phi}}$ is Lagrangian, which implies $\phi \in \Symp(M, \omega)$. Hence, $\Sympeo(M, \omega) \cap \Diff(M) \subset \Symp(M, \omega)$, and $ \Symp(M, \omega) \subset \Sympeo(M, \omega) \cap \Diff(M)$ is trivial. 
\end{ex}

\begin{ex} \label{ex-MS} Let $\phi \in \Cont(M, \xi)$ with conformal factor $g$. Consider the embedding $\Psi_{\phi, g}: M \to M \times M \times \R$ given by $x \to (x, \phi(x), g(x))$. Then fixing a contact 1-form $\alpha$ on $M$, under the contact 1-form $e^{\theta} \pi_1^* \alpha \oplus \pi_2^*(-\alpha)$ on $M \times M \times \R$ (where $\theta$ is the $\R$-coordinate), ${\rm Im}(\Psi_{\phi, g})$ is a Legendrian submanifold. In fact, it is easy to check that for $\phi \in \Diff(M)$ and a smooth function $g: M \to \R$, the image ${\rm Im}(\Psi_{\phi, g})$ is Legendrian if and only if $\phi \in \Cont(M, \xi)$ with conformal factor $g$. Then, similarly to Example \ref{ex-GE}, the conclusion (\ref{MS}) is implied by the following statement (a special case of Theorem \ref{rigidity}): if $\phi \in \Aut(M, \xi)$ with limiting conformal factor $g$ and ${\rm Im}(\Psi_{\phi, g})$ is smooth, then ${\rm Im}(\Psi_{\phi, g})$ is Legendrian. \end{ex}

A recent work from \cite{HLS15} generalizes the observation in Example \ref{ex-GE} from Lagrangian submanifolds to symplectic coisotropic submanifolds. Roughly speaking, the main result proves that symplectic homeomorphisms preserve symplectic coisotropic submanifolds. Similarly, our Theorem \ref{rigidity} is a natural generalization of Example \ref{ex-MS}. 

\begin{proof} [Proof of Theorem \ref{rigidity}] By Proposition \ref{corres}, $\R \times Y \subset SM$ is a (symplectic) coisotropic submanifold. Suppose there exists a $\phi_k \xrightarrow{C^0} \phi$ where $\phi_k$ are contactomorphisms with conformal factors $g_k \xrightarrow{C^0} g$ for some continuous function $g: M \to \R$. Recall from Section \ref{sec-prelim} that $\phi_k$ lifts to a symplectomorphism
\[ \Phi_k : SM \to SM, \quad  \Phi_k(\theta,m) = (\theta- g_k(m), \phi_k(m)). \]
Moreover, by defining $\Phi(\theta,m) = (\theta - g(m), \phi(m))$ (as the formal lift of $\phi$), $\Phi_k \xrightarrow{C^0} \Phi$ and thus $\Phi \in {\Sympeo(SM)}$. Since $\Phi(\R \times Y) = \R \times \phi(Y)$, by our assumption on the smoothness of $\phi(Y)$, $\Phi(\R \times Y)$ is smooth. The the main theorem in \cite{HLS15} implies that $\Phi(\R \times Y) = \R \times \phi(Y)$ is symplectic coisotropic. Finally, by (ii) in Proposition \ref{corres}, $\phi(Y)$ is contact coisotropic. \end{proof}

\begin{remark} The $C^0$-rigidity of contactomorphisms also holds when we drop the requirement of convergence of conformal factors, see \cite{MS14}. In this case, since we have no control on the behaviour of the limiting conformal factor, back in the proof of Theorem \ref{rigidity}, there is no guarantee that the lift $\Phi_k$ is $C^0$-convergent to $\Phi$. As a matter of fact, we highly doubt the same conclusion in Theorem \ref{rigidity} still holds in this case. \end{remark}

\section{Shelukhin-Chekanov-Hofer's metric} \label{sec-al-norm}
Let $(M, \xi = \ker\alpha)$ be a contact manifold with a fixed contact 1-form $\alpha$. For any $\phi \in \Cont_0(M, \xi)$, \cite{She17} defines the following norm 
\begin{equation} \label{norm-she}
||\phi||_{\alpha} : = \inf_{\phi_{H}^1 = \phi} \int_0^1 \max_M |H(\cdot, t)| dt.
\end{equation}
where the infimum is taken over all the contact isotopies with time-1 map being $\phi$ and $H \in C^{\infty}(M \times [0,1])$ denotes the associated contact Hamiltonian functions. Let us recall some properties of $||-||_\alpha$ proven in \cite{She17}. Though {\it not} conjugation invariant, it satisfies a natural ``coordinate-change'' formula as follows, for any $\phi, \psi \in \Cont_0(M, \xi)$, 
\begin{equation} \label{conj}
||\psi \phi \psi^{-1}||_{\alpha} = ||\phi||_{\psi^* \alpha}.
\end{equation}
Additionally, different choice of contact forms give rise to equivalent norms. Specifically, if $\beta = f\alpha$ is a another contact form, where $f : M \to (0,\infty)$ is a smooth function, then (see \cite[Lemma 10]{She17})
\begin{equation}\label{eq-SH-norm-equiva}
\min_M f \cdot ||-||_\alpha \leq ||-||_\beta \leq \max_M f \cdot ||-||_\alpha.
\end{equation}
In particular, one has
\begin{equation}\label{eq-quasi-conj-inv}
 C_-(\psi) ||\phi||_\alpha\leq||\psi \phi \psi^{-1}||_{\alpha} \leq C_+(\psi) ||\phi||_\alpha,
\end{equation}
where $C_+(\psi) = \max_M e^g$ for conformal factor $\psi^*\alpha = e^g \alpha$, and $C_-(\psi)= C_+(\psi^{-1})^{-1}$. Finally, the $\alpha$-displacement energy of a subset $U \subset M$ is defined by 
\[E_\alpha(U) = \inf \{||\phi||_\alpha \,:\, \psi(U) \cap \overline{U} = \emptyset  \}.
\]
Then (see \cite[Proposition 15]{She17})
\begin{equation}\label{eq-disp-positive}
E_\alpha(U) > 0 \text{ for any open $U\subset M$}.
\end{equation}

Recall the definition of the Shelukhin-Chekanov-Hofer's metric (or for brevity, $\alpha$-metric) (Definition \ref{SCH-norm}): We fix a subset $N 
\subset M$, and denote by $\mathcal{L}(N)$ its orbit under the action of $\Cont_0(M)$. Then
$$
\delta_\alpha (L_1, L_2) = \inf\{||\phi||_\alpha \,|\, \phi(L_1)=L_2  \}.
$$
We can prove the following proposition. 

\begin{prop} \label{prop-a} 
The $\alpha$-metric satisfies the following properties.
\begin{itemize}
\item[(1)] $\delta_{\alpha}(L, L) =0$.
\item[(2)] $\delta_{\alpha}(L_1, L_2) = \delta_{\alpha}(L_2, L_1)$.
\item[(3)] $\delta_{\alpha}(L_1, L_3) \leq \delta_{\alpha}(L_1, L_2) + \delta_{\alpha}(L_2, L_3)$.
\item[(4)] Let $\psi \in \Cont(M, \xi)$, and let $C_\pm(\psi)$ as above. Then
\begin{equation} \label{(4)}
C_-(\psi)\cdot \delta_{\alpha}(L_1, L_2) \leq \delta_{\alpha}(\psi(L_1), \psi(L_2)) \leq C_+(\psi)\cdot \delta_{\alpha}(L_1, L_2).
\end{equation}
\end{itemize}
\end{prop}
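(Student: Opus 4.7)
I will establish the four properties in order; the first three are formal consequences of the definition of $||-||_\alpha$ together with standard contact isotopy manipulations, while the main content is concentrated in (4). For (1), the identity is generated by the zero Hamiltonian, so $||\mathrm{id}||_\alpha=0$ and $\delta_\alpha(L,L)=0$ follows immediately. For (2), the claim reduces to $||\phi||_\alpha=||\phi^{-1}||_\alpha$: given any isotopy $\phi^t$ generated by $H_t$ with $\phi^1=\phi$, the reparametrization $\psi^s:=\phi^{1-s}\circ\phi^{-1}$ satisfies $\frac{d}{ds}\psi^s=-X_{1-s}\circ\psi^s$, so it runs from $\mathrm{id}$ to $\phi^{-1}$ with contact Hamiltonian $K_s=-H_{1-s}$, and $\int_0^1\max_M|K_s|\,ds=\int_0^1\max_M|H_t|\,dt$ by substitution; infimizing and using symmetry gives equality of the two norms. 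For (3), I would use time-rescaled concatenation: if $\phi_i$ sends $L_i$ to $L_{i+1}$ via a contact Hamiltonian $H^{(i)}$, then $K_t=2H^{(1)}_{2t}$ on $[0,1/2]$ and $K_t=2H^{(2)}_{2t-1}$ on $[1/2,1]$ generates $\phi_2\phi_1$ and its integrated norm equals $\int_0^1\max_M|H^{(1)}|\,dt+\int_0^1\max_M|H^{(2)}|\,dt$; infimizing over $\phi_1,\phi_2$ with $\phi_2\phi_1(L_1)=L_3$ yields the triangle inequality.

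Property (4) is the crux. The key input is the ``coordinate-change'' formula (\ref{conj}) together with the conformal transformation law for contact Hamiltonians under rescaling of the contact form: if a contact isotopy is generated by $H_t$ with respect to $\alpha$, then with respect to a rescaled form $\alpha'=e^g\alpha$ its contact Hamiltonian becomes $H'_t=\alpha'(X_t)=e^gH_t$, since the generating contact vector field $X_t$ is intrinsic to the isotopy. Applying this with $\alpha'=\psi^*\alpha=e^{g_\psi}\alpha$, where $g_\psi$ is the conformal factor of the fixed contactomorphism $\psi$ (determined by $\psi$, and hence by $F$), and bounding the continuous function $e^{g_\psi}$ between two positive constants $C_-(\psi,F)\le C_+(\psi,F)$ on the compact region supporting the relevant Hamiltonians, I obtain pointwise $C_-|H_t|\le|e^{g_\psi}H_t|\le C_+|H_t|$. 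Taking suprema, integrating in $t$, and infimizing over isotopies representing $\phi$, then combining with (\ref{conj}), gives
\[C_-||\phi||_\alpha\le||\phi||_{\psi^*\alpha}=||\psi\phi\psi^{-1}||_\alpha\le C_+||\phi||_\alpha.\]
Finally I would rewrite $\delta_\alpha(\psi(L_1),\psi(L_2))=\inf\{||\psi\phi\psi^{-1}||_\alpha:\phi(L_1)=L_2\}$ via the bijection $\phi'\leftrightarrow\psi\phi\psi^{-1}$ between contactomorphisms sending $\psi(L_1)$ to $\psi(L_2)$ and those sending $L_1$ to $L_2$, and apply the above inequality inside the infimum to deduce (\ref{(4)}).

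The main obstacle I anticipate lies entirely in (4): I must verify that $e^{g_\psi}$ is globally bounded between finite positive constants $C_\pm(\psi,F)$, and that the identification $H_t\leftrightarrow e^{g_\psi}H_t$ correctly relates contact Hamiltonians with respect to $\alpha$ and $\psi^*\alpha$. Both points reduce to the intrinsic nature of the contact vector field generating an isotopy, but some care with compact supports is needed, particularly if $M$ is non-compact; this is where one invokes that $\phi,\psi\in\Cont_0(M,\xi)$ are compactly supported. Parts (1)--(3) raise no genuine difficulty beyond the standard reparametrization and concatenation tricks familiar from the symplectic Hofer-norm setting.
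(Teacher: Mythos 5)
Your proposal is correct and follows essentially the same route as the paper: items (1)--(3) are the standard reparametrization/concatenation checks (which the paper omits as routine), and item (4) rests on the coordinate-change formula $\|\psi\phi\psi^{-1}\|_\alpha=\|\phi\|_{\psi^*\alpha}$ together with the rescaling law $H_t\mapsto e^{f}H_t$ for contact Hamiltonians under $\alpha\mapsto e^{f}\alpha$, with $C_\pm=\max_M e^{f},\ \min_M e^{f}$. The only cosmetic difference is that you extract both bounds directly from the two-sided pointwise estimate on $e^{f}H_t$, whereas the paper proves only the upper bound and then obtains the lower one by substituting $\psi^{-1}$ for $\psi$; both are valid.
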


\begin{proof} [Proof of Proposition \ref{prop-a}]
The only property which is not automatic is item (4). This follows from the fact that $\phi$ satisfies $\phi(L_1) = L_2$ if and only if $(\psi\phi\psi^{-1}) (\psi(L_1))=\psi(L_2)$, together with \eqref{eq-quasi-conj-inv}.
\end{proof}

\section{Examples of non-degenerate $\delta_{\alpha}$} \label{sec-ex-nd}

In this section, we give some examples of submanifolds $N \subset M$ such that $\delta_{\alpha}$ is non-degenerate on $\mathcal L(N)$. Before this, we want to address a useful reformulation of $\delta_{\alpha}$. By Proposition 1.2 in \cite{Ush15}, 
\begin{equation}\label{eq-delta-Usher}
\delta_\alpha (L_1, L_2) = \inf\left\{ \int_0^1 \max_{\phi_H^t(L_1)}|H_t| dt \,\,\bigg|\,\, \phi_H^1(L_1) = L_2  \right\}.
\end{equation}
\subsection{The simplest example - $(S^1, ds)$}

Consider the case when $M = S^1$ (with co-ordinate $s$ mod $1$), $\alpha = ds$ and $N = \{p\}$, for some $p \in S^1$. $N$ is Legendrian, so contact coisotropic. In this case $\mathcal L(N) = S^1$. Denote by $d_{S^1}$ the standard (angular) distance function on $S^1 = \R / \Z$.
\begin{prop}
In this case $\delta_\alpha = d_{S^1}$.
\end{prop}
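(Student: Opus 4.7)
The plan is to use Usher's reformulation \eqref{eq-delta-Usher}. For the singleton $L_1 = \{p\}$, the maximum over $\phi_H^t(L_1)$ is evaluated at a single point, so
$$\delta_\alpha(p, q) = \inf\left\{ \int_0^1 |H_t(\phi_H^t(p))|\, dt \,\,\Big|\,\, \phi_H^1(p) = q \right\}.$$
Since $\dim S^1 = 1$ the contact hyperplane field is trivial, $\xi = \ker ds = 0$, so the defining conditions for the contact vector field in Remark \ref{CVF1} force $X_{H_t} = H_t\, \partial_s$. The contact flow therefore satisfies the ODE $\dot s(t) = H_t(s(t))$, and along the trajectory $s(t) := \phi_H^t(p)$ the integrand reduces to $|H_t(s(t))| = |s'(t)|$. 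Hence the integral equals the total arclength of the smooth path $t \mapsto s(t)$ from $p$ to $q$ in $S^1$, and
$$\delta_\alpha(p, q) = \inf\{\,\mathrm{length}(s) \,\,|\,\, s\co [0,1] \to S^1 \text{ smooth, } s(0)=p,\ s(1)=q\,\}.$$

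For the upper bound $\delta_\alpha(p,q) \le d_{S^1}(p,q)$, I would let $s\co [0,1]\to S^1$ be the shortest arc from $p$ to $q$ and define the (space-independent) contact Hamiltonian $H_t \equiv s'(t)$. This $H_t$ generates rigid rotation, whose orbit through $p$ is exactly $s(t)$, so $\phi_H^1(p)=q$ and $\int_0^1 |H_t(\phi_H^t(p))|\,dt = \mathrm{length}(s) = d_{S^1}(p,q)$. (A small smoothing argument can be used if one insists on a strictly smooth-in-$t$ Hamiltonian.)

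For the lower bound $\delta_\alpha(p,q) \ge d_{S^1}(p,q)$, I would lift any competitor $s$ to the universal cover $\R$, obtaining a smooth path $\tilde s\co [0,1]\to\R$ from some lift $\tilde p$ of $p$ to some lift $\tilde q + n$ of $q$ (for an integer $n$). Then $\mathrm{length}(s) = \int_0^1 |\tilde s'(t)|\,dt \ge |\tilde q + n - \tilde p|$, and minimizing over $n \in \Z$ yields $\mathrm{length}(s) \ge d_{S^1}(p,q)$. Combining the two bounds gives $\delta_\alpha = d_{S^1}$.

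There is no substantial obstacle here; the only points requiring minor care are (i) invoking the Usher reformulation from \cite{Ush15} verbatim for the singleton case, and (ii) checking the explicit form of $X_H$ in dimension one so that the integrand collapses to $|s'(t)|$. Both are essentially computational.
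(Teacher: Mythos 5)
Your proof is correct and follows essentially the same route as the paper: both reduce via the Usher reformulation \eqref{eq-delta-Usher} to the point-trajectory integral, identify $|H_t(\phi_H^t(p))|$ with the speed $|\gamma'(t)|$ of the orbit (the paper via $\alpha(\gamma'(t)) = H_t(\phi_H^t(p))$, you via the explicit formula $X_H = H\,\partial_s$, which amounts to the same computation), bound below by arclength, and realize the infimum by a rigid rotation. No gaps.
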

\begin{proof}
Indeed, let $p\neq q \in S^1$. In our case, formula \eqref{eq-delta-Usher} simplifies to
$$
\delta_\alpha (p,q) = \inf \left\{\int_0^1 |H_t(\phi_H^t (p))|dt \,\,\bigg|\,\, \phi_H^1(p)=q \right\}.
$$
So, let $H_t$ be any contact Hamiltonian generating a contact isotopy $\phi_H^t$ with $\phi_H^1(p)=q$. Denote $\gamma(t) = \phi_H^t(p)$. Then $\gamma'(t) = X_{H_t}(\phi_H^t(p))$, and hence $\alpha(\gamma'(t)) = H_t(\phi_H^t(p))$. Note, moreover, that with respect to the standard Riammanian metric on $S^1$ (which induces the metric $d_{S^1}$) $|\gamma'(t)| = |\alpha(\gamma'(t))|$, and therefore 
$$
d_{S^1}(p,q) = d_{S^1}(\gamma(0), \gamma(1)) \leq \int_{0}^{1} |\gamma'(t)|dt = \int_0^1 |H_t(\phi_{H}^t(p))|dt.
$$
This proves that $d_{S^1}(p,q) \leq \delta_\alpha (p, q)$. The converse inequality follows easily by considering the isotopy $\{s \mapsto s \pm t\}_{0 \leq  t \leq d_{S^1}(p,q)}$ (the sign chosen positive if $q$ is obtained from $p$ via a counter-clockwise rotation by $d_{S^1}(p,q)$, and negative otherwise).
\end{proof}

\subsection{Pre-Lagrangian case} \label{ss-pl}

\begin{prop}
Let $N$ be any compact manifold and $M = T^*N \times S^1$ with the canonical contact structure induced by the 1-form $\alpha = \lambda_{\rm can} + dt$. Then for $L_0 = o_N \times S^1$, we have that $\delta_\alpha$ is non-degenerate on $\mathcal L(L_0)$.
\end{prop}

\begin{proof}
First, observe that there exists a standard symplectomorphism between $SM$ and $T^*N \times T^*S^1$ (with the split symplectic structure), and under this symplectomorphism $L_0$ is a pre-Lagrangian submanifold of $M$ with a Lagrangian lift $\hat{L}_0 = o_N \times (S^1 \times \{1\})$ in $T^*N \times T^*S^1$.  Moreover, any $L \in \mathcal L(L_{0})$ is pre-Lagrangian. Indeed, if $L = \phi(L_0) \in \mathcal L(L_0)$, then $\tilde{\phi}(\hat{L}_0)$ is a Lagrangian lift of $L_{0}$ where $\tilde{\phi}$ is the lifted Hamiltonian diffeomorphism. Now, let $L_1, L_2 \in \mathcal L(L_0)$. We claim  that there exists $C > 0$ such that 
\begin{equation}\label{eq-lower-bd}
\delta_{CH}(\hat{L}_1, \hat{L}_2) \leq C \cdot \delta_\alpha(L_1, L_2)
\end{equation}
where $\delta_{CH}$ denotes the Chekanov-Hofer (pseudo)-metric between the Lagrangian submanifolds $\hat{L}_1$ and $\hat{L}_2$. Since $SM \simeq T^*(N \times S^1)$ which is in particular geometrically bounded \footnote{We thank E.~Shelukhin and M.~Usher for addressing this ``geometrically bounded'' hypothesis to our attention and pointing out a related error in the earlier preprint of our work in subsection \ref{ss-pl}.}, the Chekanov-Hofer (pseudo)-metric on Lagrangian orbits is well-known to be non-degenerate (see \cite{Che00} or \cite{Ush14}). This implies the desired result. 

To prove \eqref{eq-lower-bd}, let $H_t$ be a contact Hamiltonian generating a contact isotopy $\phi_t$ with $\phi_1(L_1) = L_2$. The lifted isotopy $\tilde{\phi}_t : SM \to SM$ is then generated by the Hamiltonian $\tilde{H}_t(\theta, m) = e^\theta H_t(m)$ for $(\theta,m) \in \R \times M = SM$. Note that, by compactness, there exists $R>0$ such that $\cup_t \tilde{\phi}_t(\hat{L}_1) \subset [-R,R] \times M  \subset SM$. Then, for each time $t$,
\begin{equation}\label{eq-Hbar-leq-H}
\max_{\tilde{\phi}_t (\hat{L}_1 ) } |\tilde{H}_t| \leq e^R \cdot \max_{\phi_t(L_1)} |H_t|.
\end{equation}

Truncating $\tilde{H_t}$, we may obtain a compactly supported Hamiltonian $G_t$ on $SM$ such that, for each $t$,  $G_t = \tilde{H_t}$ in a neighbourhood of $\tilde{\phi_t}(\hat{L}_1)$. In particular, the Hamiltonian isotopy $\psi_t$ generated by $G_t$ satisfies $\psi_t(\hat{L}_1) = \tilde{\phi}_t(\hat{L}_1)$. Thus, computing using the analogous formula to \eqref{eq-delta-Usher} for the Chekanov-Hofer distance $\delta_{CH}$ and using \eqref{eq-Hbar-leq-H}, we get:
\begin{align*}
\delta_{CH}(\hat{L}_1, \hat{L}_2 ) \leq \int_0^1 \max_{\psi_t(\hat{L}_1) } |G_t| dt = \int_0^1 \max_{\tilde{\phi}_t (\hat{L}_1) } |\tilde{H}_t|dt \leq e^R \int_0^1 \max_{\phi_t(L)} |H_t| dt.
\end{align*}
Taking the infimum over all such $H_t$ and setting $C = e^R$ yields \eqref{eq-lower-bd}, which, as noted before, completes the proof.
\end{proof}

\subsection{Hypersurface case}

\begin{prop} Let $N$ be a closed hypersurface of a contact manifold $(M, \xi = \ker\alpha)$, then $\delta_{\alpha}$ is non-degenerate on $\mathcal L(N)$. \end{prop}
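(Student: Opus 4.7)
The strategy mirrors the pre-Lagrangian case above by lifting to the symplectization, using the hypersurface correspondence from Proposition~\ref{corres}. Since by Example~\ref{hyper} the closed hypersurface $N\subset M$ is contact coisotropic, Proposition~\ref{corres}(i) identifies $N\times\R$ as a symplectic coisotropic hypersurface of the symplectization $(SM,\,d(e^\theta\alpha))$. I will prove non-degeneracy by verifying $\bar\Sigma_N\subset\Sigma_N$, which by Lemma~\ref{non-deg-a} suffices.

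Assume for contradiction there exists $\phi\in\Cont_0(M,\xi)$ with $N':=\phi(N)\neq N$ but $\delta_\alpha(N,N')=0$. By symmetry, pick a point $p\in N\setminus N'$ together with open neighborhoods $U\subset V\subset M$ such that $p\in U$, $\overline U\subset V$ and $\overline V\cap N'=\emptyset$; then choose a compact piece $K\subset N\cap U$ containing $p$ in its $N$-interior. Given any contact isotopy $\{\phi_H^t\}$ with $\phi_H^1(N)=N'$, consider its lift $\{\tilde\phi_{\tilde H}^t\}$ on $SM$ generated by $\tilde H_t(x,\theta)=e^\theta H_t(x)$, which satisfies $\tilde\phi_{\tilde H}^1(N\times\R)=N'\times\R$. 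By compactness of $K$, the trajectory $\bigcup_t\tilde\phi_{\tilde H}^t(K\times\{0\})$ lies in $M\times[-R,R]$ for some $R>0$; truncating $\tilde H_t$ outside a small neighborhood of this trajectory produces a compactly supported Hamiltonian $G_t$ on $SM$ with $\max|G_t|\leq e^R\max_M|H_t|$, whose flow $\psi_t$ still carries $K\times\{0\}$ onto $\tilde\phi_{\tilde H}^1(K\times\{0\})\subset N'\times[-R,R]$, a set disjoint from $V\times\{0\}\supset K\times\{0\}$.

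The final step is a local Hofer-energy lower bound: in a Darboux-type neighborhood of $(p,0)\in SM$ the coisotropic piece $K\times\{0\}$ looks like a relatively compact piece of a coordinate hypersurface, and any compactly supported Hamiltonian isotopy of $SM$ displacing it from itself must have Hofer energy bounded below by a positive constant $c=c(K,V)$ depending only on the local geometry. This is precisely the mechanism powering Usher's hypersurface non-degeneracy result in~\cite{Ush14}, and since the relevant piece of his argument is essentially local (e.g., a displacement-energy estimate at a transverse point of the hypersurface), it adapts to compact pieces of a non-compact coisotropic. Combining $c\leq\int_0^1\max_{SM}|G_t|\,dt\leq e^R\int_0^1\max_M|H_t|\,dt$ and taking the infimum via the reformulation~\eqref{eq-delta-Usher} gives $\delta_\alpha(N,N')\geq e^{-R}c>0$, contradicting the assumption and forcing $N=N'$.

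The main obstacle is exactly this last step: in contrast with the pre-Lagrangian case, the lift $N\times\R$ is non-compact, so one cannot directly invoke the closed-hypersurface non-degeneracy statement of \cite{Ush14}. Localizing Usher's argument to a compact piece $K\times\{0\}$ with boundary inside $N\times\R$ is the natural remedy, justified by the fact that the symplectic displacement-energy bound for a hypersurface is a local phenomenon near any transverse point, but the required boundary/cutoff analysis is the one genuinely new technical point relative to the pre-Lagrangian proof. The contact coisotropic $C^0$-rigidity established in Theorem~\ref{rigidity} lends additional evidence that this localization behaves as expected.
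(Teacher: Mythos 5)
Your reduction to the symplectization and the truncation estimate $\max|G_t|\leq e^R\max_M|H_t|$ are fine, but the final step --- the ``local Hofer-energy lower bound'' for displacing the compact piece $K\times\{0\}$ from itself --- is false, and this is exactly the step carrying all the weight. A compact subset with boundary of a hypersurface (and a fortiori the codimension-two set $K\times\{0\}\subset SM$) has \emph{zero} displacement energy: in a Darboux chart around a transverse point, the Hamiltonian $H=\delta\chi y_1$ (with $\chi$ a cutoff equal to $1$ near $K$) pushes $K$ off the coordinate hyperplane $\{x_1=0\}$ at Hofer cost $\delta\cdot\max_{\mathrm{supp}\,\chi}|y_1|\to 0$. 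So there is no constant $c(K,V)>0$ of the kind you posit, and the inequality $c\leq\int_0^1\max_{SM}|G_t|\,dt$ cannot be extracted from the displacement you exhibit. Your reading of \cite{Ush14} is the source of the error: Usher's hypersurface non-degeneracy is \emph{not} a local displacement-energy estimate for a piece of the hypersurface near a transverse point. It rests on the global separation property of a \emph{closed} hypersurface: if $\phi(N)=N'\neq N$, a connectedness argument forces $\phi$ to displace one of finitely many fixed non-empty \emph{open} subsets of the ambient manifold (intersections of complementary components of $N$ and $N'$), and it is to such an open set that the energy-capacity inequality is applied. Precisely this separation input is lost when you localize to $K$.

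The paper's proof runs along these lines and is shorter than what you attempt: writing $M\setminus N=M_0\cup M_1$ and $M\setminus N'=M_0'\cup M_1'$, any $\phi\in\Cont_0(M,\xi)$ with $\phi(N)=N'$ must satisfy $\phi(M_0)\cap M_0'=\emptyset$ or $\phi(M_1)\cap M_0'=\emptyset$, hence displaces one of the two fixed non-empty open sets $M_i\cap M_0'$; one then picks a compact set $A_i\subset\pi^{-1}(M_i\cap M_0')$ in the symplectization and invokes Shelukhin's energy-capacity inequality (Proposition 11 of \cite{She17}), which directly bounds $\|\phi\|_{\alpha}$ from below by a constant depending only on $A_i$ --- so no hand-made truncation of the lifted Hamiltonian is needed. (The non-separating case is handled by the topological reductions of Lemmas 3.2--3.3 in \cite{Ush14}.) To repair your argument you would need to replace the displacement of $K\times\{0\}$ by the displacement of an open set produced by such a connectedness argument; as written, the proof does not go through.
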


We will only give the explicit proof when $N$ divides $M$, i.e., $M\backslash N = M_0 \sqcup M_1$ where $M_i$ is open, non-empty and connected and $\bar{M}_i = M_i \cup N$. The general case can be covered by topological argument via Lemma 3.2 and Lemma 3.3 in \cite{Ush14}. 

\begin{proof} For any $N' (\neq N) \in \mathcal L(N)$, denote $M\backslash N' = M'_0 \sqcup M'_1$. Consider, without loss of generality, the case when $N \backslash N' \neq \emptyset$. Without loss of generality, assume $M_0'$ intersects both $M_0$ and $M_1$ non-trivially.
Now let $\phi \in \Cont_0(M,\xi)$ such that $\phi(N) = N'$. Then $\phi$ maps the connected components of $M \setminus N$ to those of $M \setminus N'$, so in particular $M_1'$ is equal to one of $\phi(M_0)$ and $\phi(M_1)$. In the former case, $\phi(M_0) \cap M_0' = \emptyset$ and in particular $\phi$ displaces the (non-empty) open set $M_0' \cap M_1$. In the latter case, similarly, $\phi$ displaces $M_0' \cap M_1$. Therefore, for any such $\phi$, using \eqref{eq-disp-positive},
\[
||\phi||_\alpha \geq \min\{E_\alpha (M_0' \cap M_1), E_\alpha (M_0' \cap M_1) \}>0.
\]
It follows that $\delta_\alpha(N, N')>0$, which completes the proof.
 \end{proof}

\section{Rigid locus} \label{sec-7}
\subsection{Proof of Proposition \ref{rl-c}}

As observed in Proposition 2.2 in \cite{Ush14}, the stabilizer subgroup of $N$, that is, $\Sigma_N = \{\phi \in \Cont_0(M, \xi) \,| \, \phi(N) = N\}$ has a natural topological extension under $||-||_{\alpha}$, that is 

\begin{dfn} \label{closure}
	Denote the closure of $\Sigma_N$ under $\delta_{\alpha}$ by 
	\begin{equation}
	\bar{\Sigma}_N = \{\phi \in \Cont_0(M, \xi) \,| \, \delta_{\alpha}(N, \phi(N)) =0 \}.
	\end{equation}
\end{dfn}
The equality in Definition \ref{closure} follows from first part of the proof of Proposition 2.2 in \cite{Ush14}.

\begin{lemma} \label{non-deg-a}
The $\alpha$-metric	$\delta_\alpha$ is non-degenerate on $\cL(N)$ if and only if $\Sigma_N = {\bar \Sigma}_N$.
\end{lemma}

\begin{proof} If there exists some $\phi \in {\bar \Sigma}_N \backslash \Sigma_N$ (hence $\phi(N) \neq N$), then by Definition (\ref{closure}), $\delta_{\alpha}(N, \phi(N)) = 0$, so $\delta_{\alpha}$ is degenerate. Conversely, suppose that $\bar{\Sigma}_N=\Sigma_N$ and let $L_1, L_2 \in \mathcal L(N)$ such that $\delta_\alpha(L_1, L_2) =0$. Then there exist some $\phi_1, \phi_2 \in \Cont(M, \xi)$ such that 
	\[ 0 = \delta_{\alpha}(L_1, L_2) = \delta_{\alpha}(\phi_1(N), \phi_2(N)) \geq C_-(\phi_1) \cdot \delta_{\alpha}(N, \phi_1^{-1} \phi_2 (N)). \]
	Since $C_-(\phi_1)$ is positive, we get 
	\[ \delta_{\alpha}(N, \phi_1^{-1} \phi_2 (N)) = 0 \]
	which by definition \ref{closure} implies $\phi_1^{-1} \phi_2 \in \bar{\Sigma}_N = \Sigma_N$. Hence, $\phi_1^{-1} \phi_2 (N) = N$ which is equivalent to $L_1 = L_2$. That is, $\delta_\alpha$ is non-degenerate.
\end{proof}

It is easy to see that (1) in Proposition \ref{rl-c} follows directly from Lemma \ref{non-deg-a}. We will focus on the proof of (2) in Proposition \ref{rl-c}. It comes from the following basic lemma which follows almost immediately from Definition \ref{dfn-rl}.

\begin{lemma} \label{frag-lemma}
For any $x \in M \backslash R_N$, there exists a neighborhood $U_x$ of $x$ such that $\Cont_0(U_x) \subset \bar{\Sigma}_N$, where $\Cont_0(U_x)$ is the group of contactomorphisms of $M$ compactly supported in $U_x$. \end{lemma}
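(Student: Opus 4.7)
The plan is to case-split on whether $x$ lies in $N$ and to replace the standard invariance argument from \cite{Ush14} with a careful use of property (4) of Proposition \ref{prop-a}, which is forced on us by the failure of $\|\cdot\|_\alpha$ to be conjugation-invariant.

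First, the easy case: if $x \notin N$, then since $N$ is closed I pick any neighborhood $U_x$ of $x$ disjoint from $N$; any $\psi \in \Cont_0(U_x)$ is then the identity on $N$, so $\psi \in \Sigma_N \subset \bar{\Sigma}_N$. The substantive case is $x \in N \setminus R_N$. By the very definition of $R_N$, I choose some $\phi \in \bar{\Sigma}_N$ with $\phi(x) \notin N$. Since $M \setminus N$ is open and $\phi$ is a contactomorphism (in particular a diffeomorphism), I pick an open neighborhood $U_x$ of $x$ with $\phi(U_x) \cap N = \emptyset$; this will be the desired neighborhood.

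Given any $\psi \in \Cont_0(U_x)$, the conjugate $\phi \psi \phi^{-1}$ is compactly supported in $\phi(U_x)$, which is disjoint from $N$, so $\phi \psi \phi^{-1}$ fixes $N$ pointwise. Rearranging yields $\psi(N_\phi) = N_\phi$, where $N_\phi := \phi^{-1}(N)$. To propagate the information $\delta_\alpha(N, \phi(N)) = 0$ to $\psi(N)$, apply the lower bound in property (4) with $\phi$ to the pair $(N_\phi, N)$:
$$ C_-(\phi, F_\phi) \cdot \delta_\alpha(N_\phi, N) \leq \delta_\alpha(\phi(N_\phi), \phi(N)) = \delta_\alpha(N, \phi(N)) = 0, $$
giving $\delta_\alpha(N, N_\phi) = 0$ since $C_- > 0$. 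Then the triangle inequality, together with the upper bound in property (4) applied to $\psi$ and the fact that $\psi(N_\phi) = N_\phi$, yields
$$ \delta_\alpha(N, \psi(N)) \leq \delta_\alpha(N, N_\phi) + \delta_\alpha(\psi(N_\phi), \psi(N)) \leq 0 + C_+(\psi, F_\psi) \cdot \delta_\alpha(N_\phi, N) = 0, $$
so $\psi \in \bar{\Sigma}_N$, as required.

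The main obstacle is precisely that $\|\cdot\|_\alpha$ is not conjugation-invariant, so $\delta_\alpha$ is not $\Cont_0(M,\xi)$-invariant under the action $L \mapsto \psi(L)$; Usher's original argument in \cite{Ush14} leans on such invariance and does not transfer verbatim. The workaround is to route everything through the two-sided multiplicative bounds of property (4). The saving grace is that the distances being moved around are both zero, so the distorting conformal-factor constants $C_\pm$ never have a chance to spoil the estimate.
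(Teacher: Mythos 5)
Your proof is correct and follows essentially the same route as the paper's: both choose $\phi \in \bar{\Sigma}_N$ with $\phi(x) \notin N$, shrink to a neighborhood $U_x$ with $\phi(U_x) \cap N = \emptyset$, and exploit that conjugation by $\phi$ carries $\Cont_0(U_x)$ into $\Cont_0(\phi(U_x)) \subset \Sigma_N$. The paper then simply invokes that $\bar{\Sigma}_N$ is a group containing $\phi$, whereas you unwind that closure property explicitly via the two-sided bounds of Proposition \ref{prop-a}(4) and the triangle inequality --- a harmless, and arguably clarifying, expansion of the same argument.
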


\begin{proof} Since $x \in M \backslash R_N$, by definition, there exists some $\phi_x\in\bar{\Sigma}_N$ such that $\phi_x (x) \notin N$. Moreover, since $N$ is closed, there exists a neighborhood $U_x$ such that $\phi_x(U_x) \cap N = \emptyset$. Note that, in particular, $\Cont_0(\phi_x(U_x)) \subset \Sigma_N \subset \bar{\Sigma}_N$. Meanwhile, we know that $\phi_x \Cont_0(U_x) \phi_x^{-1} = \Cont_0(\phi_x (U_x))$. Then since $\bar{\Sigma}_N$ is a group, we know $\Cont_0(U_x) \subset \bar{\Sigma}_N$. \end{proof}

\begin{proof} [Proof of (2) in Proposition \ref{rl-c}] Since $R_N = \emptyset$, according to Lemma \ref{frag-lemma}, for each point $x \in  N$ we can find a neighborhood $U_x$ with condition that $\Cont_0(U_x) \subset \bar{\Sigma}_N$. In this way, we obtain an open cover of $M$, $(M \backslash N) \cup \bigcup_{x \in N} U_x$. According to the contact fragmentation lemma \cite{Ban97, Ryb10}, the group $\Cont_0(M, \xi)$ is generated by contactomorphisms supported in elements of the cover. Since $\Cont_0(U_x) \subset \bar{\Sigma}_N$ for all $x \in N$ by construction, and $\Cont_0(M \setminus N) \subset \Sigma_N$ trivially, we obtain $\Cont_0(M, \xi) = \bar{\Sigma}_N$. Therefore, $\delta_{\alpha}$ vanishes identically. \end{proof}

\begin{remark} \label{frag-rmk} The same argument where the fragmentation lemma is applied to the contact manifold $M \backslash R_N$ implies $\Cont_0(M \backslash R_N) \subset \bar{\Sigma}_N$. This is a parallel result to Proposition 2.1 in \cite{Ush15}. \end{remark}

\subsection{Useful corollaries} 
As a corollary of Remark \ref{frag-rmk}, we can show the following result which claims that a rigid locus $R_N$ behaves very similarly to a contact coisotropic submanifold. However, in general, $R_N$ can be very singular. 

\begin{prop} \label{rl-lie} Let $I_{R_N} = \{H \in C^{\infty}(M) \,| \, H|_{R_N} =0\}$ then $I_{R_N}$ is a Lie subalgebra, i.e., $\{F, G\}_{\alpha} = 0$ on $R_N$ if $F, G \in I_{R_N}$. \end{prop}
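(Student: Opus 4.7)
The plan is to mirror the strategy used in the converse direction of the proof of Proposition \ref{C-P}, replacing the tangent-space argument there by an approximation argument driven by Remark \ref{frag-rmk}. Fix $F, G \in I_{R_N}$ and $p \in R_N$. Since $F(p) = 0$, the defining formula (\ref{CPB}) collapses to $\{F, G\}_c(p) = dF_p(X_G(p))$, so it suffices to establish the stronger assertion that the trajectory $t \mapsto \phi_G^t(p)$ remains in $R_N$ for all sufficiently small $t$: then $F \circ \phi_G^t(p) \equiv 0$ near $t=0$, and differentiating at $t=0$ yields $dF_p(X_G(p)) = 0$.

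Next I would observe that the group $\bar\Sigma_N$ preserves $R_N$ setwise. This is essentially tautological from the fact that $\bar\Sigma_N$ is a group: if $x \in R_N$ and $\phi \in \bar\Sigma_N$, then for any $\psi \in \bar\Sigma_N$ one has $\psi\phi \in \bar\Sigma_N$, whence $\psi(\phi(x)) = (\psi\phi)(x) \in N$ by definition of $R_N$. Thus $\phi(x) \in R_N$, so it is enough to prove $\phi_G^t \in \bar\Sigma_N$ for all small $t$.

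For this I would employ an approximation. Choose smooth cutoff functions $\chi_n \colon M \to [0,1]$ which vanish on a nested sequence of open neighborhoods $U_n \supset R_N$ with $\bigcap_n U_n = R_N$, and which equal $1$ outside a slightly larger set. Set $G_n = \chi_n G$; then $G_n$ is compactly supported in $M \setminus R_N$, so Remark \ref{frag-rmk} gives $\phi_{G_n}^t \in \bar\Sigma_N$ for every $t$. Moreover, because $G$ vanishes on $R_N$, we have $||G - G_n||_{C^0} \leq \sup_{U_n}|G| \to 0$ as $n \to \infty$.

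The main technical obstacle is to upgrade this $C^0$-control of contact Hamiltonians to $||-||_\alpha$-control of the corresponding contactomorphisms; concretely, one must show $||\phi_G^t (\phi_{G_n}^t)^{-1}||_\alpha \to 0$. For this I would differentiate the isotopy $s \mapsto \phi_G^s (\phi_{G_n}^s)^{-1}$ in $s$ and identify its generating contact Hamiltonian $L^n_s$ explicitly; it has the form $(G - G_n) \circ (\phi_G^s)^{-1}$ plus a correction coming from the conformal factor of $\phi_G^s$, both of which are uniformly bounded on $s \in [0,1]$ by compactness. The bound (\ref{norm-she}) then gives $||\phi_G^t (\phi_{G_n}^t)^{-1}||_\alpha \leq \int_0^t \max_M |L^n_s|\, ds \to 0$, and combined with $\phi_{G_n}^t \in \bar\Sigma_N$ and the fact that $\bar\Sigma_N$ is closed in $||-||_\alpha$ by definition (\ref{closure}), this yields $\phi_G^t \in \bar\Sigma_N$, as required.
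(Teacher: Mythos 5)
Your overall architecture matches the paper's proof: reduce the bracket computation to showing $\phi_G^t \in \bar{\Sigma}_N$ for small $t$, note that $\bar{\Sigma}_N$, being a group, preserves $R_N$ (a step you justify more explicitly than the paper does), and obtain $\phi_G^t \in \bar{\Sigma}_N$ by approximating it in $||-||_{\alpha}$ by flows of Hamiltonians supported away from $R_N$, which lie in $\bar{\Sigma}_N$ by Remark \ref{frag-rmk}. Your spatial cutoff $\chi_n G$ versus the paper's cutoff in the target, $\beta_n \circ G$, is an immaterial difference for the $C^0$-estimate.

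The gap is in the final, and only genuinely technical, step: the identification of the generator of $s \mapsto \phi_G^s (\phi_{G_n}^s)^{-1}$. With this ordering the contact composition formula gives, at $y = \phi_G^s(z)$, the Hamiltonian $G(y) - e^{g_s(z) - g^n_s(z)} G_n(\phi_{G_n}^s(z))$, where $g_s$ and $g^n_s$ are the conformal factors of $\phi_G^s$ and $\phi_{G_n}^s$ respectively. This is not of the form you claim, for two reasons. First, the correction involves the $n$-dependent conformal factor $g^n_s$, which for a spatial cutoff is not obviously uniformly bounded: $dG_n(R_\alpha) = \chi_n\, dG(R_\alpha) + G\, d\chi_n(R_\alpha)$, and $|G|\,|d\chi_n|$ need not stay bounded as the neighborhoods shrink unless $\chi_n$ is chosen adapted to $G$. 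Second, and fatally, the two terms are evaluated along two \emph{different} flows ($\phi_G^s$ and $\phi_{G_n}^s$), so they do not combine into $G - G_n$ at a single point; the resulting maximum is of order $\max_M|G| + \max_M|G_n|$ rather than $\max_M|G - G_n|$, and the estimate does not tend to zero. The formula you wrote down is essentially correct, but for the \emph{reversed} ordering: $(\phi_G^s)^{-1}\phi_{G_n}^s$ is generated by $e^{-g_s}\cdot\bigl((G_n - G)\circ \phi_G^s\bigr)$, in which only the $n$-independent factor $e^{-g_s}$ appears and both terms are transported by the same flow, yielding $||(\phi_G^t)^{-1}\phi_{G_n}^t||_{\alpha} \leq \bigl(\int_0^t \max_M e^{-g_s}\, ds\bigr)\cdot \max_M|G_n - G| \to 0$. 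This is exactly the computation in the paper's proof (which uses $\phi_G^{-t}\phi_{\beta_n G}^t$); after this correction, the conclusion $\phi_G^t \in \bar{\Sigma}_N$ follows as you indicate, using the triangle inequality for $\delta_{\alpha}$ together with Proposition \ref{prop-a}(4) to pass from the norm estimate to $\delta_{\alpha}(N, \phi_G^t(N)) = 0$.
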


\begin{proof} In fact, we only need to show that if $G|_{R_N}=0$, then $\phi_G^t \in \bar{\Sigma}_N$ for any $t$ (only need $t$ sufficiently small). Then indeed, $\phi_G^t$ preserves $R_N$. So for any $x \in R_N$, 
\[ \{F, G\}_{\alpha}(x) = dF(X_G)(x) - dG(R_{\alpha}) F(x) = dF(X_G)(x) = 0. \]
Without loss of generality, assume $t=1$. In order to show $\phi_G^1 \in \bar{\Sigma}_N$, by Remark \ref{frag-rmk}, we only need to show $\phi_G^1$ can be approximated under $\delta_{\alpha}$ by a sequence of contactomorphisms $\phi_n \in \Cont_0(M \backslash R_N)$ (hence in $\bar{\Sigma}_N$). The construction of this sequence is carried out in the proof of Proposition 2.2 in \cite{Ush15} and also in the proof of Lemma 4.3 in \cite{Ush14}. 

Explicitly, take a sequence of smooth functions $\beta_n: \R \to \R$ such that $\beta_n(s) = s$ for $|s| \geq 1/n$ and $\beta_n(s) = 0$ for $|s| < 1/(2n)$. Then note that 
\begin{equation} \label{delta-0}
\max_M |\beta_n \circ G - G| \to 0  \,\,\,\,\,\mbox{as}\,\,\,\,\, n \to \infty.
\end{equation}
Meanwhile, by definition, 
\[ d_{\alpha}(\phi^1_G, \phi^1_{\beta_n \circ G}) = ||\phi^{-1}_G \phi^1_{\beta_n \circ G} ||_\alpha \]
and by composition formula (see the third relation in Lemma 2.2 in \cite{MS15}), we know the contact isotopy $\phi^{-t}_{G} \phi^{t}_{\beta_n \circ G}$ is generated by the contact Hamiltonian 
\[ e^{-g_t} \cdot ((\beta_n \circ G - G) \circ \phi_G^t) \]
where importantly the conformal factor $g_t$ comes from the contact Hamiltonian $G$ which is independent of the sequence $\{\beta_n\}_n$. Therefore
\begin{align*}
||\phi^{-1}_G \phi^1_{\beta_n \circ G} ||_\alpha & \leq \int_0^1 \max_M (e^{-g_t} \cdot |(\beta_n \circ G - G) \circ \phi_G^t|) dt \\
& \leq \int_0^1 \max_M e^{-g_t} \cdot \max_M |(\beta_n \circ G - G) \circ \phi_G^t| dt\\
& \leq  \int_0^1 \max_M e^{-g_t} \cdot \max_M |(\beta_n \circ G - G)| dt\\
& \leq  \left(\int_0^1 \max_M e^{-g_t}dt\right) \cdot \max_M |(\beta_n \circ G - G)| \to 0 \,\,\,\,\mbox{as}\,\,\,\, n \to \infty.
\end{align*}
Therefore, $\phi_{\beta_n \circ G}^1 \xrightarrow{\delta_{\alpha}} \phi_G^1$. Moreover, by construction and our assumption that $G|_{R_N} =0$, each $\phi_{\beta_n \circ G}^1 \in \Cont_0(M \backslash R_N)$ (hence in $\bar{\Sigma}_N$). Therefore, due to closure under $\delta_{\alpha}$, $\phi_G^1 \in \bar{\Sigma}_N$. 
\end{proof}

The following proposition, as a corollary of Proposition \ref{rl-c}, justifies that our interest mostly lies in contact coisotropic submanifolds. 

\begin{prop}\label{nec}
Let $N \subset M$ be a submanifold. If $\delta_{\alpha}$ is non-degenerate on $\mathcal L(N)$, then $N$ is contact coisotropic.
\end{prop}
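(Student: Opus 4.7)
The plan is to combine three ingredients already established in this section: Lemma \ref{non-deg-a}, Proposition \ref{rl-lie}, and Proposition \ref{C-P}. The key observation is that non-degeneracy of $\delta_\alpha$ forces $R_N = N$, which then allows Proposition \ref{rl-lie} to be upgraded into a statement about the ideal $I_N$, at which point Proposition \ref{C-P} closes the argument.

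First, I would invoke Lemma \ref{non-deg-a}: non-degeneracy of $\delta_\alpha$ on $\mathcal L(N)$ is equivalent to $\Sigma_N = \bar{\Sigma}_N$, which in particular says that every element of $\bar{\Sigma}_N$ preserves $N$ setwise. Unpacking the definition of the rigid locus, this immediately yields $R_N = N$. Indeed, the inclusion $R_N \subseteq N$ is built into the definition, and conversely for any $x \in N$ and any $\phi \in \bar{\Sigma}_N = \Sigma_N$ one has $\phi(x) \in \phi(N) = N$, so $x \in R_N$.

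Next, Proposition \ref{rl-lie} asserts that $I_{R_N}$ is a Lie subalgebra of $C^\infty(M)$ with respect to the contact Poisson bracket $\{-,-\}_c$. Substituting $R_N = N$, this becomes the statement that $I_N$ is a Lie subalgebra. Proposition \ref{C-P} then concludes that $N$ is contact coisotropic.

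I do not anticipate any genuine obstacle here, since all of the real work has already been done: the subtle handling of the failure of conjugation-invariance of $\|-\|_\alpha$ was carried out in Lemma \ref{non-deg-a} via the two-sided estimate (\ref{(4)}), and the approximation of $\phi_G^1$ (for $G$ vanishing on $R_N$) by contactomorphisms supported in $M \setminus R_N$ was arranged in the proof of Proposition \ref{rl-lie} using the cutoff functions $\beta_n$. The only small caveat to flag is that Proposition \ref{C-P} is stated for submanifolds $Y \subset M$, so this proposition implicitly assumes $N$ carries a submanifold structure --- which is consistent with the standing framework of the paper.
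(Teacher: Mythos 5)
Your proof is correct, and it takes a more modular route than the paper's. The paper argues by contrapositive: assuming $N$ is not coisotropic at some $p$, it picks a function $H$ vanishing on $N$ with $X_H(p) \in \xi_p \setminus T_pN$, runs the $\beta_n$-truncation argument (the same one used for Proposition \ref{rl-lie}) to place $\phi_H^t$ in $\bar{\Sigma}_N$, and observes that $\phi_H^t(p)$ leaves $N$ for small $t>0$, so $p \notin R_N$, hence $R_N \subsetneq N$ and $\delta_\alpha$ is degenerate. Your route packages all of that into results already on record: non-degeneracy forces $R_N = N$ via Lemma \ref{non-deg-a}, Proposition \ref{rl-lie} then says $I_N = I_{R_N}$ is a Lie subalgebra, and the converse direction of Proposition \ref{C-P} finishes. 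The analytic core (the truncation step placing flows of functions vanishing on $N$ into $\bar{\Sigma}_N$) is identical in both arguments, but yours avoids re-deriving inline the linear-algebra half of Proposition \ref{C-P}'s converse, which the paper essentially repeats. Two small points worth recording: (i) $I_{R_N}$ in Proposition \ref{rl-lie} consists of all smooth functions vanishing on $R_N$, whereas $I_Y$ in Proposition \ref{C-P} is the compactly supported ideal; since $\{F,G\}_c = dF(X_G) + dG(R_\alpha)\cdot F$ is supported in the support of $F$, the Lie-subalgebra property passes to the compactly supported ideal and the hypothesis of Proposition \ref{C-P} is genuinely met. (ii) Your caveat that $N$ must be a submanifold for the conclusion to make sense is apt, and it is the same implicit standing assumption the paper's own proof relies on when it speaks of $T_pN$.
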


\begin{proof} We will prove it by contrapositive. Suppose $N$ is not contact coiso\-tropic, then by definition there exists a point $p \in N$ and a vector $v \in (T_pN \cap \xi_p)^{\perp_{d\alpha}}$ which is not in $T_pN \cap \xi_p$. Then we can find a function $H$ on $M$ such that $H|_N =0$ but $dH_p(v) \neq 0$. If we take the same approximating sequence $H_n = \beta_n \circ H \xrightarrow{C^0} H$ as in the proof of Proposition \ref{rl-lie}, then $\phi_{H_n}^t \in \Sigma_N$ and also $\phi_{H_n}^t \xrightarrow{\delta_{\alpha}} \phi_H^t$. Therefore, $\phi_H^t \in \bar{\Sigma}_N$. By definition of $R_N$, $\phi_H^t(R_N) \subset R_N$. Now, for $p \in N$,  $\alpha_p(X_H(p)) = H(p) =0$, so $X_H(p) \in \xi_p$. Meanwhile, for the vector $v$ chosen earlier, 
\[ d\alpha_p(X_H(p), v) = dH(p)(v) \neq 0 \]
which yields $X_H(p) \notin ((T_pN \cap \xi_p)^{\perp_{d\alpha}})^{\perp_{d\alpha}} = T_pN \cap \xi_p$. Therefore, $X_H(p) \notin T_p N$. Then for sufficiently small $t > 0 $, $\phi_H^t(p) \notin N$. Then $\phi_H^t(p) \notin R_N$ because $R_N \subset N$. We conclude $p \notin R_N$ and thus $R_N \neq N$ (so strictly contained in $N$). By Proposition \ref{rl-c}, $\delta_{\alpha}$ is degenerate.  \end{proof}

\section{Dichotomy}
 
Recall the dichotomy phenomena appearing in the Corollary 2.7 in \cite{Ush15} says that in the symplectic set-up, when $\dim N = \frac{1}{2} \dim M$ where $N$ is connected and closed, the Chekanov-Hofer (pseudo)-metric $\delta_{CH}$ is either non-degenerate or vanishes identically. Theorem \ref{dic} shows an analogue result in the contact topology set-up.

\subsection{Local model analysis} The proof of Theorem \ref{dic} starts from the following local analysis. Let $M$ be a contact manifold with dimension $2n+1$ and $N \subset M$ be a submanifold of dimension $n$. For any $x \in N$, its neighborhood can be modeled as a neighborhood of $\vec{0}$ in $(\R^{2n+1}, \alpha)$ where in coordinate $(x_1, \dots, x_n, y_1,\dots, y_n, z)$, $\alpha = dz - \sum y_i dx_i$. Without loss of generality, we can assume 
either 
\[ N = \{x_1= \cdots = x_k = y_1 = \cdots y_{n-k} = z =0\}\]
or 
\[ N = \{x_1= \cdots = x_{k+1} = y_1 = \cdots =y_{n-k} =0\} \]
We will careful study the first case and the second case will be discussed in Remark \ref{one-more}. Also note that the first case covers the situation where $N$ is a Legendrian submanifold by standard neighborhood theorem. Consider the following three types of coordinate-functions near $\vec{0}$. (i) $F = x_m$ for $m \in \{1, \dots, k\}$; (ii) $F = y_m$ for $m \in \{1 ,\dots, n-k\}$; (iii) $F = z$. We will investigate their corresponding contact vector fields. Recall that the contact vector field $X_F$ is uniquely determined by the following differential equations
\begin{equation} \label{chv}
\left\{ \begin{array}{ll} \iota_{X_F} d\alpha = dF(R_{\alpha}) \alpha - dF\\ \alpha(X_F) = F \end{array} \right. 
\end{equation}
where locally $R_{\alpha} = \frac{\partial}{\partial z}$ and $d \alpha = \sum dx_i \wedge dy_i$. Let $X_F = \sum A_i \frac{\partial}{\partial x_i} + B_i \frac{\partial}{\partial y_i} + C \frac{\partial}{\partial z}$,
\begin{itemize}
\item[(i)] if $F = x_m$, we know $dF(R_{\alpha}) = 0$, so we are reduced to solve 
\begin{equation} \label{chv-1}
\left\{ \begin{array}{ll} \iota_{X} d\alpha = - dF\\ \alpha(X) = F \end{array} \right.  \Longrightarrow \left\{ \begin{array}{ll} \sum (A_i dy_i - B_i dx_i) = - dx_m \\ C - \sum A_iy_i = x_m \end{array} \right. 
\end{equation}
that is, $A_i = 0$ for all $i$, $B_i  = 1$ only for $i=m$ and $0$ otherwise, and $C = x_m$. Therefore, $X_F = \frac{\partial}{\partial y_m}  + x_m \frac{\partial}{\partial z}$. Note that at $\vec{0}$, $X_F(\vec{0}) = \frac{\partial}{\partial y_m} \neq 0$;
\item[(ii)] if $F = y_m$, we still know $dF(R_{\alpha}) = 0$, so we are reduced to solve 
\begin{equation} \label{chv-2}
\left\{ \begin{array}{ll} \iota_{X} d\alpha = - dF\\ \alpha(X) = F \end{array} \right.  \Longrightarrow \left\{ \begin{array}{ll} \sum (A_i dy_i - B_i dx_i) = - dy_m \\ C - \sum A_iy_i = y_m \end{array} \right. 
\end{equation}
that is, $A_i = -1$ only for $i=m$ and $0$ otherwise, $B_i  = 0$ for all $i$, and $C = 0$. Therefore, $X_F = -\frac{\partial}{\partial x_m}$ and at $\vec{0}$, $X_F(\vec{0}) \neq 0$.  
\item[(iii)] if $F = z$, we know $dF(R_{\alpha}) = 1$, so we are reduced to solve 
\begin{equation} \label{chv-3}
\left\{ \begin{array}{ll} \iota_{X} d\alpha = \alpha - dF\\ \alpha(X) = F \end{array} \right.  \Longrightarrow \left\{ \begin{array}{ll} \sum (A_i dy_i - B_i dx_i) = -\sum y_i dx_i \\ C - \sum A_iy_i = z \end{array} \right. 
\end{equation}
that is, $A_i = 0$ for all $i$, $B_i  = -y_i$ only for all $i$, and $C = z$. Therefore, $X_F = \sum y_i\frac{\partial}{\partial y_i}  + z \frac{\partial}{\partial z}$. Note that at $\vec{0}$, $X_F$ is degenerate, i.e. $X_F(\vec{0}) = 0$.
\end{itemize}
Let $F_i$ denote the coordinate-function of type (i) and (ii) above (and there are in total $n$ many of them). For any fixed point $x \in N$, the following map 
\[ \Phi: \R^{n} \to M \,\,\,\,\mbox{by}\,\,\,\, (a_1, \dots, a_n) \to \phi_{\sum a_i F_i}^1(x) \]
provides an embedding 
\begin{equation} \label{emb}
B^n(\ep) \hookrightarrow \mbox{neighborhood of $x$ in $N$}
\end{equation}
where $B^n(\ep)$ is the open $n$-dimensional ball with radius $\ep>0$. This is because $d\Phi(\vec{0})$ maps each basis element $e_i$ to a vector $X_{F_i}(x)$ - the contact Hamiltonian vector field of $F_i$ at $x$ - for some $i$. By computations in (\ref{chv-1}) and (\ref{chv-2}), these $n$ vectors are linearly independent. Be aware that we abandoned $F=z$ in type (iii) above because at $\vec{0}$, it is degenerate (so it does not provide a linearly independent direction contributing to our embeddings). In other words, $n$ is the maximal dimension that we can obtain for embeddings like $(\ref{emb})$. 

\begin{remark} \label{one-more} 
Recall that the local model of $N$ have two different choices, and their difference is whether $\{z =0\}$ is included or not. Based on the computation above, observe that we don't have the type (iii) if one considers the local model $N$ without the condition $\{z=0\}$. Then by the same argument as above, we have $(n+1)$-many linearly independent vectors coming from $(n+1)$-many coordinate-functions which do not include $z$. Hence, for any fixed $x \in N$, there exists an embedding $B^{n+1}(\ep)$ into a neighborhood of $x$ in~$N$. 
\end{remark}

\subsection{Proof of Theorem \ref{dic}}
\begin{proof} [Proof of Theorem \ref{dic}]
Suppose $R_N \neq \emptyset$. Then for any $x \in R_N$, by (\ref{emb}) and Remark \ref{one-more}, there exists an embedding from either $B^{n}(\ep)$ or $B^{n+1}(\ep)$ into $N$. By our choice of coordinate-functions, in either case, $\sum a_i F_i \in I_{R_N}$ because $R_N \subset N$ and each $F_i|_N=0$. By the proof of Proposition \ref{rl-lie}, $\phi_{\sum a_i F_i}^1 \in \bar{\Sigma}_N$. In particular, $\phi_{\sum a_i F_i}^1$ preserves $R_N$. In other words, we can upgrade the embedding from $B^n(\ep)$ or $B^{n+1}(\ep)$ into $R_N$. 

In terms of embedding from $B^{n+1}(\ep)$, it already gives a contradiction because $\dim N = n< n+1$. On the other hand, due to the embedding from $B^n(\ep)$, one knows $R_N$ is open in $N$. Meanwhile, $R_N$ is also closed by its definition, which, by the connectedness of $N$, implies $R_N = N$. Therefore, by Proposition \ref{rl-c}, $\delta_{\alpha}$ is non-degenerate. \end{proof}

\section{Different measurements} \label{sec-dm}
One standard way to approach Conjecture \ref{conj-1} (at least following the original idead from \cite{Che00}, or its extension in Section 4 in \cite{Ush14} and \cite{Ush15} in the symplectic set-up) is first getting a dichotomy result as above and then using some energy-capacity inequality to rule out the identical vanishing possibility. Interested readers can check a successful procedure in this spirit from Theorem 4.9 and Corollary 4.10 in \cite{Ush14} (or the original argument in \cite{Che00}). Naive attempts to implement this approach in proving Conjecture \ref{conj-1}  run into the following problem: the energy estimates in the symplectic case stem from the positivity of displacement energy of Lagrangians, which fails for Legendrians in view of Example \ref{ex-inf-dis}. One possible solution is to look for displacement of sets in the symplectization, and apply Proposition 11 in \cite{She17} rather then Corollary 15. 
Here the main difficulty is the lack of a well-established energy-capacity inequality (only) in terms of norm $||-||_{\alpha}$. Note the proof of Proposition 10 in \cite{She17} still involves conformal factors (but well-controlled for a {\it single} contactomorphism via a cut-off technique). However, when taking infimum over various contactomorphisms, for instance, in the definition of $\delta_{\alpha}$ considering all the contactomorphisms $\phi$ such that $\phi(L_1) = L_2$, there is no guarantee that in the symplectization there exists a uniform ball which can be displaced.

In this section, we want to point out that in the contact topology set-up, unlike Hofer's metric in symplectic topology, there is no such ``canonical'' quantity to measure the behavior of dynamics. Instead, several quantities have been invented to get certain rigidity results in contact topology. For instance, the following one was used in \cite{RS16}, namely, for $\phi \in \Cont_0(M, \xi)$ set
\begin{equation}\label{eq-RS-norm}
||\phi||_{RS} : = \inf \left\{e^{- \min_{M, t \in [0,1]} g_t} \left(\int_0^1 \max_M H(\cdot, t) - \min_{M} H(\cdot, t) dt\right)\right\},
\end{equation}
where the infimum is taken over all contact Hamiltonian $H$ generating a contact isotopy $\phi_H^t$ with $\phi_H^1=\phi$, and $g_t$ is the conformal factor of $\phi_H^t$ for $t \in [0,1]$ with respect to the contact 1-form $\alpha$. It would be an interesting question to compare $||\phi||_{RS}$ with $||\phi||_{\alpha}$. In that regard, let us mention that removing the conformal term from \eqref{eq-RS-norm} one obtains the definition of Shelukhin's oscillation semi-norm, which is related to the $\alpha$-distance from the Reeb subgroup (see Definition 23 and Proposition 24 in \cite{She17}.)

In our context, we can consider a related quantity, a modification of the Hofer-Shelukhin norm which is sensitive to divergence of conformal factors (this is suggested by M. Usher).

\begin{dfn} \label{dfn-norm-mod} Given a contact manifold $(M, \xi = \ker \alpha)$, for any $\phi \in \Cont_0(M, \xi)$, define 
\[ ||\phi||_{\alpha, m} : = ||\phi||_{\alpha} + \max_M |g_{\phi}| \]
where $g_{\phi}$ is the conformal factor of $\phi$ with respect to contact 1-form $\alpha$, i.e. $\phi^*\alpha = e^{g_\phi}\alpha$. \end{dfn}

Then one easily gets the following proposition. 

\begin{prop}
The quantity $||\phi||_{\alpha,m}$ defined in Definition \ref{dfn-norm-mod} satisfies 
\begin{itemize}
    \item[(i)] $||\phi||_{\alpha,m} \geq ||\phi||_{\alpha}$ and equality holds for any strict contactomorphism. 
    \item[(ii)] $||\phi||_{\alpha, m}$ is a norm on $\Cont_0(M, \xi)$.
\end{itemize}
\end{prop}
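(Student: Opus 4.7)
The plan is to verify both parts directly from the definition of $||\cdot||_{\alpha,m}$, leveraging the fact that $||\cdot||_{\alpha}$ is already established as a norm in \cite{She17} together with the standard cocycle identity for conformal factors under composition. I will not need any deep energy–capacity input, since the modification only adds the non-negative quantity $\max_M |g_\phi|$, which is cheap to control.

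For part (i), I would simply observe that $\max_M |g_\phi| \geq 0$, giving $||\phi||_{\alpha,m} \geq ||\phi||_\alpha$ immediately from the definition. If $\phi$ is strict, by definition $\phi^*\alpha = \alpha$, so $g_\phi \equiv 0$ and the extra term vanishes, producing equality.

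For part (ii), I would check the norm axioms on $\Cont_0(M,\xi)$ one at a time. Non-negativity is obvious. Non-degeneracy follows because $||\phi||_{\alpha,m}=0$ forces $||\phi||_\alpha=0$, and then $\phi=\I$ by Shelukhin's non-degeneracy result. For symmetry $||\phi^{-1}||_{\alpha,m} = ||\phi||_{\alpha,m}$, I would split off the two pieces: Shelukhin's norm satisfies $||\phi^{-1}||_\alpha = ||\phi||_\alpha$ (by running any generating contact Hamiltonian of $\phi$ in reverse, which changes the sign and thus preserves the $|\cdot|$ inside the defining integral), while from $\phi^*\alpha = e^{g_\phi}\alpha$ applied to $\phi^{-1}$ one computes the cocycle relation
\begin{equation*}
g_{\phi^{-1}} = -\, g_\phi \circ \phi^{-1},
\end{equation*}
so that $\max_M |g_{\phi^{-1}}| = \max_M |g_\phi|$.

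Finally, for the triangle inequality $||\phi\psi||_{\alpha,m} \leq ||\phi||_{\alpha,m} + ||\psi||_{\alpha,m}$, I would use the triangle inequality of $||\cdot||_\alpha$ (already in \cite{She17}) for the first summand, and for the second summand apply the composition formula
\begin{equation*}
g_{\phi\psi} = g_\phi\circ\psi + g_\psi,
\end{equation*}
obtained from $(\phi\psi)^*\alpha = \psi^*\phi^*\alpha = e^{g_\phi\circ \psi + g_\psi}\alpha$. Then $\max_M|g_{\phi\psi}| \leq \max_M|g_\phi\circ\psi| + \max_M|g_\psi| = \max_M|g_\phi| + \max_M|g_\psi|$, and adding to the $||\cdot||_\alpha$ inequality closes the argument. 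No step here is really an obstacle; the only mildly subtle point is making sure the symmetry of $||\cdot||_\alpha$ is invoked correctly, which is handled at the level of generating Hamiltonians rather than conformal factors.
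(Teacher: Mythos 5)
Your proposal is correct and follows essentially the same route as the paper: part (i) from the non-negativity/vanishing of $\max_M|g_\phi|$, and part (ii) from the norm properties of $\|\cdot\|_\alpha$ combined with the cocycle identities $g_{\phi^{-1}} = -g_\phi\circ\phi^{-1}$ and $g_{\phi\psi} = g_\phi\circ\psi + g_\psi$. Your extra care in justifying the symmetry $\|\phi^{-1}\|_\alpha = \|\phi\|_\alpha$ at the level of generating Hamiltonians is a minor refinement of a step the paper takes for granted, not a different argument.
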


\begin{proof} (i) is trivial since for any strict contactomorphism, its associated conformal factor is always $0$. (ii) follows from properties of conformal factors. If $g_{\phi}$ is the conformal factor of $\phi$, then $- g_{\phi}\circ \phi^{-1}$ is the conformal factor of $\phi^{-1}$, so 
\[ ||\phi^{-1}||_{\alpha, m} = ||\phi^{-1}||_{\alpha} + \max_{M} |- g_{\phi} \circ \phi^{-1}| = ||\phi||_{\alpha} + \max_{M}|g_{\phi}|= ||\phi||_{\alpha, m}. \]
Similarly, if $g_{\phi}$ and $g_{\psi}$ are conformal factors of $\phi$ and $\psi$ respectively, then $g_{\psi} \circ \phi + g_{\phi}$ is the conformal factor of $\psi \circ \phi$, so 
\begin{align*}
    ||\psi \circ \phi||_{\alpha, m} & = ||\psi \circ \phi||_{\alpha} + \max_{M} |g_{\psi \circ \phi}| \\
    & \leq ||\psi||_{\alpha} +||\phi||_{\alpha} + \max_M |g_{\psi} \circ \phi + g_{\phi}|\\
    & \leq (||\psi||_{\alpha} + \max_{M} |g_{\psi}|) + (||\phi||_{\alpha} + \max_{M} |g_{\phi}|) = ||\psi||_{\alpha,m} + ||\phi||_{\alpha, m}.
\end{align*}
Finally, the non-degeneracy of $||-||_{\alpha, m}$ trivially comes from the non-degeneracy of $||-||_{\alpha}$ and (i). Moreover, when $\phi = \mathds{1}$, $||\mathds{1}||_{\alpha, m} = ||\mathds{1}||_{\alpha} =0$.
\end{proof}

\begin{remark} $||-||_{\alpha, m}$ provides a measurement which is comparable with (but not equivalent to) $||-||_{\alpha}$. Similar to $\delta_{\alpha}$, by using $||-||_{\alpha, m}$, we can also define Chekanov-type psuedo-metic denoted as $\delta_{\alpha, m}$. Interested readers can check that all the conclusions on $\delta_{\alpha}$ also hold for $\delta_{\alpha, m}$ (thus we also have a dichotomy with respect to this modified norm). Certainly Conjecture \ref{conj-1} can be modified to be stated under $\delta_{\alpha,m}$. From our personal perspective, this modified conjecture is more likely to be true. Finally, for some rigidity results characterized by quantity like $||-||_{RS}$ involving Legendrian submanifolds, see Section 1.3 in \cite{RS16}.  \end{remark}

\bibliography{9-ZhangJ}
\noindent \\

\end{document}